\newcommand{\TheTitle}{Adaptive first-order methods with enhanced worst-case rates}
\newcommand{\TheAuthors}{Mihai I. Florea}
\newcommand{\inn}[1]{\in \{ 1, ... , #1 \}}
\title{\TheTitle}
\author{Mihai I. Florea\thanks{Department of Mathematical Engineering (INMA), Catholic University of Louvain (UCL), Belgium; Department of Electronics and Computers, Transilvania University of Bra\c{s}ov, Romania. E-mail: \email{mihai.florea@uclouvain.be}}.
This project has received funding from the European Research Council (ERC) under the European Union's Horizon 2020 research and innovation programme (grant agreement No. 788368).}
\headers{\TheTitle}{\TheAuthors}
\begin{document}

\maketitle

\begin{abstract}
The Optimized Gradient Method (OGM), its strongly convex extension, the Information Theoretical Exact Method (ITEM), as well as the related Triple Momentum Method (TMM) have superior convergence guarantees when compared to the Fast Gradient Method but lack adaptivity and their derivation is incompatible with composite problems. In this work we introduce a slightly modified version of the estimate sequence that can be used to simultaneously derive OGM, ITEM and TMM while adding memory along with the ability to dynamically adjust the convergence guarantees at runtime. Our framework can be extended to the composite setup and we use it to construct an Enhanced Accelerated Composite Gradient Method equipped with fully-adaptive line-search.
\end{abstract}

\section{Introduction}

A very wide array of problems, spanning the areas of mathematics, statistics, computer science, signal processing, optimal control, economics, operations research etc., can be modeled as large-scale composite problems. The objective in these problems can be expressed as the sum between a differentiable function with Lipschitz gradient and a simple proximable regularizer. The extended value regularizer is infinite outside the feasible set and may not be differentiable. The large scale of the problems prevents optimization algorithms used to solve them from using any higher order information, being thus limited to problem constants and first-order oracle function output.

Despite the importance of this problem class, the collection of available tools for algorithm design is much more limited when compared to the better theoretically described subclass of large-scale \emph{smooth unconstrained} problems. One particularly powerful tool that is not readily applicable to composite problems is the Performance Estimation Problem~\cite{ref_002,ref_018}. It has been used to construct the Optimized Gradient Method (OGM) \cite{ref_011} and its strongly-convex extension the Information Theoretical Exact Method (ITEM) \cite{ref_017}. OGM has the best known convergence guarantees on the class of smooth unconstrained problems whereas the guarantees of ITEM on strongly convex problems are unimprovable, even in terms of the proportionality constant. The Triple Momentum Method~\cite{ref_019} has been independently developed using control theoretic arguments~\cite{ref_012}. Interestingly, the introduction of TMM predates that of ITEM and, even though both methods were derived in completely different ways, the two appear to be very closely related. The main difference is that TMM can only be applied to strongly convex objectives whereas ITEM can also operate when strong convexity is absent. But on the restricted class of strongly convex smooth problems, TMM and ITEM have the same asymptotic guarantees. Aside from the PEP and the control theoretical Integral Quadratic Constrains (IQC)~\cite{ref_012,ref_019}, the three methods have been collectively analyzed using Lyapunov (potential) functions~\cite{ref_001,ref_017}. However, neither of the methods feature an adaptive mechanism to increase performance at runtime nor can they deal with constraints. It remains unclear whether the PEP, IQC or the potential function approaches can be used to adapt these methods to the class of composite problems while retaining any adaptive mechanism other than restarting schemes.

In \cite{ref_007}, we have devised primal-dual bounds on smooth objectives that, combined with a novel primal-dual estimate sequence framework, can be used to generate a first-order method having the same asymptotic convergence guarantees as OGM while also featuring a convergence guarantee adjustment procedure. Preliminary simulation results suggest that, at least for difficult problems, the dynamic convergence guarantee adjustment is an effective substitute for line-search.

The primal-dual bounds in \cite{ref_007} were designed to solve the oracle output interpolation problem. However, they introduced a slack term in the analysis that resulted in convergence guarantees that, despite being asymptotically optimal even up to the proportionality constant, are slightly lower than those of OGM. The reason behind this discrepancy is the lack of any slack term in the convergence analysis of OGM. Based on this observation, in this work we introduce a form for the primal-dual bounds that is specifically designed to maximize the worst-case guarantees. Our analysis is readily applicable to strongly convex objectives. We enhance the primal-dual estimate functions to better deal with strong convexity and we equip them with our improved bounds.

We then use our machinery to derive an Optimized Gradient Method with Memory (OGMM) applicable to strongly convex objectives that encompasses OGM, TMM and ITEM as particular cases. Unlike in \cite{ref_007}, our analysis no longer contains slack terms. The estimate function powers a dynamic convergence guarantee adjustment procedure, previously used in \cite{ref_004,ref_006,ref_007}, that can also be combined with a subset of the oracle history to further improve performance.

The enhanced estimate functions are not limited to primal-dual bounds and can accept global lower bounds that apply to composite optimization problems, in the process taking advantage of the Quadratic Functional Growth property~\cite{ref_013}, a consequence of strong convexity. Our new estimate function framework combined with the bounds derived in \cite{ref_009,ref_010} \emph{directly} yields an Enhanced Accelerated Composite Gradient Method (EACGM) that features fully adaptive line-search and currently has the highest known guarantees in iterate space for an adaptive scheme applicable to the entire composite problem class.

Preliminary simulation results on synthetic smooth problems confirm our theoretical findings and show that in iterate space the Optimized Gradient Method with Memory outperforms both ITEM and TMM and, interestingly, is competitive with the state-of-the-art adaptive Accelerated Composite Gradient Method (ACGM)~\cite{ref_009,ref_008}. The Enhanced Accelerated Composite Gradient Method also surpasses ACGM on the synthetic composite problem benchmark.

\section{Optimal methods for smooth unconstrained minimization}

In this first part, we focus exclusively on smooth unconstrained problems. We build the relevant mathematical machinery and use it to derive a version of the Optimized Gradient Method with Memory with improved worst-case guarantees that can effectively deal with known strong convexity.

\subsection{Problem description}

We consider the following smooth unconstrained optimization problem:
\begin{equation} \label{label_001}
\min_{x \in \mathbb{E}} f(x).
\end{equation}
The function $f:\mathbb{E} \rightarrow \mathbb{R}$ is convex and differentiable over the entire $\mathbb{E}$. The gradient of $f$ is Lipschitz with constant $L_f > 0$ ($f$ is thus $L_f$-smooth) and $f$ has a strong convexity parameter $\mu \in [0, L_f)$. We define the Euclidean norm as $\| x \| \overset{\operatorname{def}}{=} \sqrt{ \langle B x, x \rangle }$, $x \in \mathbb{E}$ with $B \succ 0$. The dual norm is $\| g \|_* \overset{\operatorname{def}}{=} \sqrt{ \langle g, B^{-1} g \rangle }$, $g \in \mathbb{E}^*$.
The Lipschitz gradient and strong convexity can equivalently written as the following combined property for all $z_1, z_2 \in \mathbb{E}$:
\begin{equation} \label{label_002}
\frac{\mu}{2} \| z_1 - z_2 \|^2 \leq f(z_1) - f(z_2) - \langle f'(z_2), z_1 - z_2 \rangle \leq \frac{L_f}{2} \| z_1 - z_2 \|^2 .
\end{equation}
To simplify the derivations we introduce the inverse condition number $q \in [0, 1)$ and the ratio $r \geq 1$, respectively given by
\begin{equation}
q \overset{\operatorname{def}}{=} \frac{\mu}{L_f}, \quad r \overset{\operatorname{def}}{=} \frac{L_f}{L_f-\mu} = \frac{1}{1 - q}.
\end{equation}
A first-order scheme calls the oracle at the points $y_k$, $k \geq 1$, to obtain the pair $(f(y_k), g_k)$, where $g_k \overset{\operatorname{def}}{=} f'(y_k)$, $k \geq 1$. Only the starting point $y_1 = x_0$ is known to the algorithm, the rest of the points being selected using the output of the oracle at the previous points. The Lipschitz gradient property along with Majorization-Minimization framework (see \cite{ref_007} and the references therein) at the points where the oracle is called produces the additional sequence
\begin{equation} \label{label_003}
x_k \overset{\operatorname{def}}{=} T_{L_f}(y_k) = y_k - \frac{1}{L_f} B^{-1} g_k, \quad k \geq 1,
\end{equation}
where $T_{L}(z) \overset{\operatorname{def}}{=} z - \frac{1}{L} B^{-1} f'(z)$ for all $z \in \mathbb{E}$, $L > 0$.
Note that $x_0$ designates the starting point and does not adhere to \eqref{label_003}.

We consider the problem $\min_{x \in \mathbb{E}}f(x)$ and assume it has a non-empty optimal set $X^*$. We select one element $x^*$ from this set and consider it fixed, unless specified otherwise.

\subsection{Primal-dual bounds}
We define the counterpart of the objective $f$ with the strong convexity removed as
\begin{equation} \label{label_004}
f_{(\mu)}(y) \overset{\operatorname{def}}{=} f(y) - \frac{\mu}{2}\| y \|^2, \quad y \in \mathbb{E} .
\end{equation}
Obviously, $f_{(\mu)}$ is convex and has a $(L_f-\mu)$-Lipschitz gradient.
It is a well-known result (see, e.g.~\cite{ref_014}) that \eqref{label_002} can be equivalently expressed as a single inequality satisfied for all $z_1, z_2 \in \mathbb{E}$ as
\begin{equation} \label{label_005}
f_{(\mu)}(z_1) \geq f(z_2) + \left\langle f_{(\mu)}'(z_2), z_1 - z_2 \right\rangle + \frac{1}{2 (L_f - \mu)} \| f_{(\mu)}'(z_1) - f_{(\mu)}'(z_2) \|_*^2.
\end{equation}

Expanding \eqref{label_004} and noting that $f_{(\mu)}'(y) = f'(y) - \mu B y$ for all $y \in \mathbb{E}$, we equivalently obtain for all $z_1, z_2 \in \mathbb{E}$ a bound in a canonical form that completely describes the problem~\cite{ref_018}:
\begin{equation} \label{label_006}
\begin{gathered}
f(z_1) \geq f(z_2) + \langle f'(z_2), z_1 - z_2 \rangle \\
+ r \left( \frac{\mu}{2} \| z_1 - z_2 \|^2 + \frac{1}{2 L_f} \| f'(z_1) - f'(z_2) \|_*^2 - \frac{\mu}{L} \langle f'(z_1) - f'(z_2), z_1 - z_2 \rangle \right).
\end{gathered}
\end{equation}
We refactor the bound in \eqref{label_006} to obtain a form that is key to obtaining an optimal method for this problem class
\begin{equation} \label{label_007}
\begin{gathered}
f(z_1) \geq f(z_2) + \langle f'(z_2), T_{L_f}(z_1) - z_2 \rangle \\ + \frac{\mu r}{2} \| T_{L_f}(z_1) - T_{L_f}(z_2) \|^2 + \frac{1}{2 L_f} \| f'(z_1) \|_*^2 + \frac{1}{2 L_f} \| f'(z_2) \|_*^2.
\end{gathered}
\end{equation}

We consider the following primal-dual bound centered around $y_k$ defined for all $x \in \mathbb{E}$, $g \in \mathbb{E}^*$ and $k \geq 1$ as
\begin{equation} \label{label_008}
w_k(x, g) \overset{\operatorname{def}}{=} f(y_k) + \frac{1}{2 L_f} \| g_k \|_*^2 + \langle g_k, x - y_k \rangle + \frac{\mu r}{2} \|x - x_k \|^2 + \frac{1}{2 L_f} \| g \|_*^2.
\end{equation}
From \eqref{label_007} taken with $z_1 = y$ and $z_2 = y_k$ we have that
\begin{equation} \label{label_009}
f(y) \geq w_k(T_{L_f}(y), f'(y)), \quad y \in \mathbb{E}, \quad k \geq 1.
\end{equation}

The novelty of our new bounds in \eqref{label_008} over the the primal-dual bounds introduced in~\cite{ref_007} lies in the use of the combined variable $x$. By performing the substitution
\begin{equation} \label{label_010}
x = y - \frac{1}{L_f} B^{-1} g,
\end{equation}
we quickly recover from \eqref{label_008} the primal-dual bound form in \cite{ref_007}. However, as we will show in the sequel, the variable change in \eqref{label_010} is essential in eliminating the slack terms in the analysis to obtain the fast possible rate.

Another component necessary in obtaining the absolute optimal rate is the auxiliary bound defined as
\begin{equation} \label{label_011}
\hat{w}_k(x) \overset{\operatorname{def}}{=} \frac{\mu r}{2} \| x - x_k \|^2 + \frac{1}{2 L_f} \| g_k \|_*^2, \quad x \in \mathbb{E}, \quad k \geq 1.
\end{equation}
The smooth unconstrained nature of the problem ensures that $f'(x^*) = 0$ which yields the identity $T_{L_f}(x^*) = x^*$. Further taking \eqref{label_007}, this time with $z_1 = y_k$ and $z_2 = x^*$, produces
\begin{equation} \label{label_012}
f(y_k) \geq f^* + \hat{w}_k(x^*), \quad k \geq 1.
\end{equation}

\subsection{The estimate functions}

Our modified bounds imply a corresponding modification of the primal-dual estimate functions introduced in \cite{ref_007}. After performing the variable change in \eqref{label_010}, we consider that
the definitions and results in this section apply to all $x \in \mathbb{E}$ and $g \in \mathbb{E}^*$.

Recall that the primal-dual estimate functions in \cite{ref_007} employ the convergence guarantees in the aggregation of simple primal-dual bounds. This aspect is retained in our current construction. Let the convergence guarantees at the start of every iteration $k \geq 1$ be denoted by $A_k$ with $A_1 \geq 0$ and $A_{k + 1} > A_{k}$, $k \geq 1$. The positive difference terms are given by $a_{k + 1} = A_{k + 1} - A_{k} > 0$ for all $k \geq 1$. The simplest means of aggregating the lower bounds in \eqref{label_008} is by weighted averaging into the compound $W_k$, namely
\begin{equation} \label{label_013}
(A_k - A_1) W_k(x, g) = \displaystyle \sum_{i = 2}^{k} a_i w_i(x, g), \quad k \geq 2.
\end{equation}
Note that our aggregation technique in \eqref{label_014} implies that $w_1$ is not included in the weighted average. In practice, this translates to starting our algorithm at the point $x_1$, obtained from $y_1 = x_0$ by taking the gradient step in \eqref{label_003}. This convention effectively addresses the case when $A_1 > 0$, as we will outline in the sequel.

Using auxiliary bounds $\hat{w}_k$ given by \eqref{label_011} alongside our aggregate $W_k$ in \eqref{label_013}, we propose the estimate functions taking for all $k \geq 1$ the form
\begin{equation} \label{label_014}
\psi_k(x, g) = (A_k - A_1) W_k(x, g) + A_k \hat{w}_k(x) + A_1 (f(y_1) - \hat{w}_1(x)) + \frac{\gamma_1}{2} \| x - v_1 \|^2.
\end{equation}
Here, the regularizer is quadratic term defined by the curvature $\gamma_1 > 0$, which is an algorithm constant, and the point $v_1$ that can be arbitrary in $\mathbb{E}$. The estimate function optima are given by
\begin{equation} \label{label_015}
(v_k, s_k) = \arg\min_{x, g} \psi_k(x, g), \quad k \geq 1.
\end{equation}
We have $\psi_1(x, g) = A_1 f(y_1) + \frac{\gamma_1}{2} \| x - v_1 \|^2$ with $\psi_1^* = A_1 f(y_1)$ whereas $W_1$ and $s_1$ are left undefined. Arguably the best choice for $v_1$ is $x_1$ because, right after the computation of $g_1$, $x_1$ is the closest point to the optimal set that can be constructed without relying on additional oracle calls. The refactored structure of the bounds in \eqref{label_008} and \eqref{label_011} ensure that $s_k = 0$, $k \geq 2$, and that the estimate functions take the canonical form
\begin{equation} \label{label_016}
\psi_k(x, g) =\psi_k^* + \frac{\gamma_k}{2} \| x - v_k \|^2 + \frac{A_k - A_1}{2 L_f} \| g \|_*^2, \quad k \geq 1,
\end{equation}
where
\begin{equation} \label{label_017}
\gamma_{k + 1} = \gamma_k + 2 \mu r a_{k + 1}, \quad k \geq 1.
\end{equation}
Iterating for all $k \geq 1$ we obtain $\gamma_{k} = \gamma_1 + 2 \mu r (A_k - A_1)$.

The estimate functions in \eqref{label_014} thus differ from the primal-dual estimate functions previously introduced in \cite{ref_007} by the addition of the term $\hat{w}_k$, counterbalanced so that the initial estimate function remains unchanged. This increases the growth rate of the curvature in \eqref{label_017}. Moreover, the variable change in \eqref{label_010} effectively eliminates the dual optimum $s_k$ in \eqref{label_015} from the canonical form in \eqref{label_016}.

In the sequel we show that we need only to maintain the Estimate Sequence Property (ESP), stated as
\begin{equation} \label{label_018}
A_k f(y_k) \leq \psi_k^*, \quad k \geq 1,
\end{equation}
to produce a valid algorithm with $A_k$ as the convergence guarantees.
The ESP along with our assumption leads to the following chain of inequalities:
\begin{equation} \label{label_019}
\begin{gathered}
A_k f(y_k) + \frac{\gamma_k}{2} \| v_k - x^* \|^2 \overset{\eqref{label_016}}{\leq} \psi_k(x^*, 0) \overset{\eqref{label_014}}{\leq} (A_k - A_1) f^* + A_k \hat{w}_k(x^*) \\ + A_1 ( f(y_1) - \hat{w}_1(x^*) ) + \frac{\gamma_1}{2} \| y_1 - x^* \|^2 \overset{\eqref{label_012}}{\leq} A_k f(y_k) + \mathcal{D}_1, \quad k \geq 1,
\end{gathered}
\end{equation}
where $\mathcal{D}_1 \overset{\operatorname{def}}{=} A_1 (f(y_1) - f^* - \hat{w}_1(x^*) ) + \frac{\gamma_1}{2} \| v_1 - x^* \|^2$. Applying \eqref{label_012} with $k = 1$ confirms that $\mathcal{D}_1 \geq 0$.

We can obtain convergence guarantees in two ways. First, we only apply the first two inequalities in \eqref{label_019} to obtain
\begin{equation} \label{label_018_1}
f(y_k) - \hat{w}_k(x^*) - f^* \leq \frac{\mathcal{D}_1}{A_k}, \quad k \geq 1.
\end{equation}
When $\mu = 0$, \eqref{label_018_1} implies that
\begin{equation} \label{label_021}
f(x_k) \leq f(y_k) - \frac{1}{2 L_f} \| g_k \|_*^2 \leq \frac{\mathcal{D}_1}{A_k}, \quad k \geq 1.
\end{equation}
The second means of obtaining convergence guarantees becomes available only when $\mu > 0$. Canceling the $A_k f(y_k)$ term on both sides of the whole \eqref{label_019} yields
\begin{equation} \label{label_022}
f(v_k) - f^* \leq \frac{L_f}{2} \| v_k - x^* \|^2 \leq \frac{L_f \mathcal{D}_1}{\gamma_k} = \frac{L_f \mathcal{D}_1}{\gamma_1 + 2 \mu r (A_k - A_1)}, \quad k \geq 1.
\end{equation}

\subsection{Towards an algorithm}

The aggregate weight and estimate function definitions in \eqref{label_013} and \eqref{label_014} result in an estimate function update that notably departs from the first-order design pattern previously proposed in \cite{ref_009,ref_010,ref_003}. Combining \eqref{label_013} and \eqref{label_014} results in the new update
\begin{equation} \label{label_023}
\psi_{k + 1}(x, g) - A_{k + 1} \hat{w}_{k + 1}(x) = \psi_k(x, g) - A_k \hat{w}_k(x) + a_{k + 1} w_{k + 1}(x, g), \quad k \geq 1.
\end{equation}
Expanding all terms in \eqref{label_023} using the canonical form in \eqref{label_016} and our bound choices in \eqref{label_008} and \eqref{label_011}, respectively, we obtain for all $k \geq 1$ that
\begin{equation} \label{label_024}
\begin{gathered}
\psi_{k + 1}^* + \frac{\gamma_{k + 1}}{2} \| x - v_{k + 1} \|^2 + \frac{A_{k + 1} - A_1}{2 L_f} \| g \|_*^2 - \frac{\mu r A_{k + 1}}{2} \| x - x_{k + 1} \|^2 - \frac{A_{k + 1}}{2 L_f} \| g_{k + 1} \|_*^2 \\
= \psi_k^* + \frac{\gamma_k}{2} \| x - v_k \|^2 + \frac{A_k - A_1}{2 L_f} \| g \|_*^2 - \frac{\mu r A_k}{2} \| x - x_k \|^2 - \frac{A_k}{2 L_f} \| g_k \|_*^2 \\
+ a_{k + 1} \left( f(y_{k + 1}) + \frac{1}{2 L_f} \| g_{k + 1} \|_*^2 + \langle g_{k + 1}, x - y_{k + 1} \rangle + \frac{\mu r}{2} \| x - x_{k + 1} \|^2 + \frac{1}{2 L_f} \| g \|_*^2 \right).
\end{gathered}
\end{equation}
We define the estimate sequence gaps in \eqref{label_018} as $\Gamma_k \overset{\operatorname{def}}{=} \psi_k^* - A_k f(y_k)$, $k \geq 1$. Rearranging terms in \eqref{label_024} gives
\begin{equation} \label{label_025}
\begin{gathered}
\Gamma_{k + 1} - \Gamma_k = \frac{\gamma_{k}}{2} \| x - v_{k} \|^2 - \frac{\gamma_{k + 1}}{2} \| x - v_{k + 1} \|^2 + \frac{\mu r (A_{k + 1} + a_{k + 1})}{2} \| x - x_{k + 1} \|^2 \\ - \frac{\mu r A_k}{2} \| x - x_k \|^2
+ \frac{A_{k + 1} + a_{k + 1}}{2 L_f} \| g_{k + 1} \|_*^2 - \frac{A_k}{2 L_f} \| g_k \|_*^2 \\ + a_{k + 1} \langle g_{k + 1}, x - y_{k + 1} \rangle + A_k(f(y_k) - f(y_{k + 1})), \quad k \geq 1.
\end{gathered}
\end{equation}
Differentiating \eqref{label_024} with respect to $x$ and then setting $x = y_{k + 1}$ gives
\begin{equation} \label{label_026}
\begin{gathered}
\gamma_{k + 1} (v_{k + 1} - y_{k + 1}) = \gamma_k (v_k - y_{k + 1}) + \mu r (A_{k + 1} + a_{k + 1}) (x_{k + 1} - y_{k + 1}) \\ - \mu r A_k (x_k - y_{k + 1}) - a_{k + 1} g_{k + 1}
\overset{\eqref{label_003}}{=} \gamma_k (v_k - y_{k + 1}) - \mu r A_k (x_k - y_{k + 1}) \\ - (a_{k + 1} + q r (A_{k + 1} + a_{k + 1})) B^{-1} g_{k + 1}, \quad k \geq 1.
\end{gathered}
\end{equation}
To simplify our exposition, we define the coefficient of $B^{-1} g_{k + 1}$ as $\bar{a}_{k + 1}$ and refactor it using $1 + r q = r$ as
\begin{equation} \label{label_027}
\bar{a}_{k + 1} \overset{\operatorname{def}}{=} a_{k + 1} + q r (A_k + 2 a_{k + 1}) = r (a_{k + 1} + q A_{k + 1}), \quad k \geq 1.
\end{equation}
The estimate function optima update in \eqref{label_026} can thus be equivalently expressed as
\begin{align}
\gamma_{k + 1} v_{k + 1} &= \gamma_k v_k + \mu r (A_{k + 1} + a_{k + 1}) x_{k + 1} - \mu r A_k x_k - a_{k + 1} B^{-1} g_{k + 1} \label{label_028} \\
& = \gamma_k v_k + \mu r (A_{k + 1} + a_{k + 1}) y_{k + 1} - \mu r A_k x_k - \bar{a}_{k + 1} B^{-1} g_{k + 1} \label{label_029}
\end{align}

Applying \eqref{label_007} with $z_1 = y_k$ and $z_2 = y_{k + 1}$ we obtain
\begin{equation} \label{label_030}
\begin{gathered}
A_k (f(y_k) - f(y_{k + 1})) \geq A_k \langle g_{k + 1}, x_k - y_{k + 1} \rangle \\ + \frac{\mu r A_k}{2} \| x_k - x_{k + 1} \|^2 + \frac{A_k}{2 L_f} \| g_k \|_*^2 + \frac{A_k}{2 L_f} \| g_{k + 1} \|_*^2, \quad k \geq 1.
\end{gathered}
\end{equation}
Further using \eqref{label_003}, we also have for all $k \geq 1$ the identity
\begin{equation} \label{label_031}
\frac{\mu r A_k}{2} \left( \|x_k - x_{k + 1}\|^2 - \| y_k - x_{k + 1} \|^2 \right) = \frac{q r A_k}{2} \langle g_{k + 1}, x_k - y_{k + 1} \rangle + \frac{q r A_k}{2 L_f} \| g_{k + 1} \|_*^2.
\end{equation}
Setting $x = y_{k + 1}$ in \eqref{label_025}, combining with \eqref{label_030} and grouping terms using $1 + qr = r$ and \eqref{label_031} reveals the remarkably simple inequality
\begin{equation} \label{label_032}
\begin{gathered}
\Gamma_{k + 1} - \Gamma_k \geq \frac{\gamma_{k}}{2} \| v_{k} - y_{k + 1} \|^2 - \frac{\gamma_{k + 1}}{2} \| v_{k + 1} - y_{k + 1} \|^2 \\ + r A_k \langle g_{k + 1}, x_k - y_{k + 1} \rangle + \frac{r A_{k + 1}}{L_f} \| g_{k + 1} \|_*^2, \quad k \geq 1.
\end{gathered}
\end{equation}
Now we can formulate the main result of this section.
\begin{theorem} \label{label_033}
When the estimate functions are updated using \eqref{label_024} we have
\begin{equation}
\begin{gathered}
\Gamma_{k + 1} - \Gamma_k \geq
\left( \frac{r A_{k + 1}}{L_f} - \frac{\bar{a}_{k + 1}^2}{2 \gamma_{k + 1}} \right)
\left( \| g_{k + 1} \|_*^2 - \frac{\mu^2 \gamma_k \gamma_{k + 1}}{\bar{\gamma}^2_{k + 1}} \| v_k - y_{k + 1} \|^2 \right) \\
+ \left\langle \frac{\bar{\gamma}_{k + 1}}{\gamma_{k + 1}} g_{k + 1} + \mu B \left( \frac{\gamma_k}{\bar{\gamma}_{k + 1}} (v_k - y_{k + 1}) - \frac{\mu}{2 \gamma_{k + 1}} Y_{k + 1} \right), Y_{k + 1} \right\rangle,
\end{gathered}
\end{equation}
for all $k \geq 0$ where
\begin{align}
\bar{\gamma}_{k + 1} &\overset{\operatorname{def}}{=} \gamma_{k + 1} - \mu \bar{a}_{k + 1} ,\label{label_034} \\
Y_{k + 1} &\overset{\operatorname{def}}{=} r A_k (x_k - y_{k + 1}) + \frac{\bar{a}_{k + 1} \gamma_k}{\bar{\gamma}_{k + 1}}(v_k - y_{k + 1}) \label{label_035} .
\end{align}
\end{theorem}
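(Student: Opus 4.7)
The plan is to start from inequality \eqref{label_032}, which already encodes the one non-algebraic step (the smoothness/strong-convexity bound \eqref{label_007}), and rearrange its right-hand side into the stated form by pure algebra. The only additional input is the optimum update \eqref{label_029}: subtracting $\gamma_{k+1} y_{k+1}$ from both sides and using $\gamma_{k+1} - \gamma_k = 2 \mu r a_{k+1}$ together with $A_{k+1} + a_{k+1} = A_k + 2 a_{k+1}$ gives
\begin{equation*}
\gamma_{k+1}(v_{k+1} - y_{k+1}) = \gamma_k (v_k - y_{k+1}) - \mu r A_k (x_k - y_{k+1}) - \bar{a}_{k+1} B^{-1} g_{k+1}.
\end{equation*}
Squaring and substituting into \eqref{label_032} turns the right-hand side into a quadratic form in the three vectors $v_k - y_{k+1}$, $x_k - y_{k+1}$, $B^{-1} g_{k+1}$, consisting of three diagonal and three bilinear terms.

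The match with the target then proceeds coefficient-by-coefficient. The coefficient of $\|g_{k+1}\|_*^2$ is immediately $\tfrac{r A_{k+1}}{L_f} - \tfrac{\bar{a}_{k+1}^2}{2 \gamma_{k+1}}$, and that of $\|x_k - y_{k+1}\|^2$ is $-\mu^2 r^2 A_k^2/(2 \gamma_{k+1})$, which coincides with the contribution of the $rA_k(x_k - y_{k+1})$ piece of $Y_{k+1}$ to $-\tfrac{\mu^2}{2 \gamma_{k+1}}\|Y_{k+1}\|^2$. The decisive observation is on the $(x_k - y_{k+1})$-cross-terms: the factor $rA_k$ of $\langle g_{k+1}, x_k - y_{k+1}\rangle$ carried from \eqref{label_032} combines with the $-\mu r A_k \bar{a}_{k+1}/\gamma_{k+1}$ contribution from the expanded square to yield exactly $rA_k \bar{\gamma}_{k+1}/\gamma_{k+1}$, with $\bar{\gamma}_{k+1} = \gamma_{k+1} - \mu \bar{a}_{k+1}$. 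The same $\bar{\gamma}_{k+1}$ factor appears on the cross-term $\langle B(v_k - y_{k+1}), x_k - y_{k+1}\rangle$ after subtracting the relevant part of $-\tfrac{\mu^2}{2 \gamma_{k+1}}\|Y_{k+1}\|^2$. These two cancellations simultaneously force the definitions \eqref{label_034}--\eqref{label_035} and collapse all $(x_k - y_{k+1})$-dependent terms into the single inner product appearing on the right of the theorem.

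The main obstacle is the residual coefficient of $\|v_k - y_{k+1}\|^2$: with $\bar{\gamma}_{k+1}$ and $Y_{k+1}$ pinned down by the cross-term matching, one must still verify that $\gamma_k/2 - \gamma_k^2/(2 \gamma_{k+1})$, minus the diagonal contributions of the target inner product in $v_k - y_{k+1}$, equals $-\left(\tfrac{r A_{k+1}}{L_f} - \tfrac{\bar{a}_{k+1}^2}{2 \gamma_{k+1}}\right) \tfrac{\mu^2 \gamma_k \gamma_{k+1}}{\bar{\gamma}_{k+1}^2}$. This is where the refactoring \eqref{label_027} of $\bar{a}_{k+1}$ and the relation $rq = r - 1$ enter non-trivially; it is the only over-determined consistency check in the derivation, and once it holds the theorem statement follows at once.
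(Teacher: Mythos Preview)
Your proposal is correct and follows essentially the same route as the paper: start from \eqref{label_032}, use the optimum update \eqref{label_026}/\eqref{label_029} to expand $\tfrac{\gamma_k}{2}\|v_k-y_{k+1}\|^2-\tfrac{\gamma_{k+1}}{2}\|v_{k+1}-y_{k+1}\|^2$, and regroup; the ``main obstacle'' you isolate is precisely the identity \eqref{label_040}. The only difference is organizational: the paper substitutes $x_k-y_{k+1}$ in terms of $Y_{k+1}$ and $v_k-y_{k+1}$ \emph{before} expanding the square (yielding \eqref{label_037}--\eqref{label_038}), whereas you expand in the original three vectors and then match coefficients --- same computation, different bookkeeping.
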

\begin{proof}
Throughout this proof we consider all $k \geq 1$. From \eqref{label_035} we have
\begin{equation} \label{label_036}
-\mu r A_k (x_k - y_{k + 1}) = \frac{\mu \bar{a}_{k + 1} \gamma_k}{\bar{\gamma}_{k + 1}}(v_k - y_{k + 1}) - \mu Y_{k + 1}.
\end{equation}
Substituting \eqref{label_036} into \eqref{label_026} and using \eqref{label_034} we obtain
\begin{equation} \label{label_037}
\gamma_{k + 1}(v_{k + 1} - y_{k + 1}) = \frac{\gamma_k \gamma_{k + 1}}{\bar{\gamma}_{k + 1}} (v_k - y_{k + 1}) - \mu Y_{k + 1} - \bar{a}_{k + 1} B^{-1} g_{k + 1} .
\end{equation}
We thus have
\begin{equation} \label{label_038}
\begin{gathered}
\frac{\gamma_{k}}{2} \| v_{k} - y_{k + 1} \|^2 - \frac{\gamma_{k_+ 1}}{2} \| v_{k + 1} - y_{k + 1} \|^2 \\= \frac{\gamma_{k}}{2} \| v_{k} - y_{k + 1} \|^2 - \frac{1}{2 \gamma_{k_+ 1}} \left\| \frac{\gamma_k \gamma_{k + 1}}{\bar{\gamma}_{k + 1}}(v_k - y_{k + 1}) - \mu Y_{k + 1} - \bar{a}_{k + 1} B^{-1} g_{k + 1} \right\|^2 \\
= \frac{\gamma_k}{2 \bar{\gamma}_{k + 1}^2} \left( \bar{\gamma}_{k + 1}^2 - \gamma_k \gamma_{k + 1} \right) \| v_k - y_{k + 1} \|^2 - \frac{\mu^2}{2 \gamma_{k + 1}} \| Y_{k + 1} \|^2 - \frac{\bar{a}_{k + 1}^2}{2 \gamma_{k + 1}} \| g_{k + 1} \|_*^2 \\+ \frac{\bar{a}_{k + 1} \gamma_k}{\bar{\gamma}_{k + 1}} \langle g_{k + 1}, v_k - y_{k + 1} \rangle + \left\langle \frac{\mu \gamma_k }{\bar{\gamma}_{k + 1}} B (v_k - y_{k + 1}) - \frac{\mu \bar{a}_{k + 1}}{\gamma_{k + 1}} g_{k + 1} , Y_{k + 1} \right\rangle .
\end{gathered}
\end{equation}
Substituting \eqref{label_038} into \eqref{label_032} and regrouping terms using \eqref{label_035} gives
\begin{equation} \label{label_039}
\begin{gathered}
\Gamma_{k + 1} - \Gamma_k \geq
\left( \frac{r A_{k + 1}}{L_f} - \frac{\bar{a}_{k + 1}^2}{2 \gamma_{k + 1}} \right)
\| g_{k + 1} \|_*^2 + \frac{\gamma_k}{2 \bar{\gamma}_{k + 1}^2} \left( \bar{\gamma}_{k + 1}^2 - \gamma_k \gamma_{k + 1} \right) \| v_k - y_{k + 1} \|^2 \\
+ \left\langle \left(1 - \frac{\mu \bar{a}_{k + 1}}{\gamma_{k + 1}} \right) g_{k + 1} + \mu B \left( \frac{\gamma_k}{\bar{\gamma}_{k + 1}} (v_k - y_{k + 1}) - \frac{\mu}{2 \gamma_{k + 1}} Y_{k + 1} \right), Y_{k + 1} \right\rangle.
\end{gathered}
\end{equation}
On the other hand, we have
\begin{equation} \label{label_040}
\begin{gathered}
\bar{\gamma}_{k + 1}^2 - \gamma_k \gamma_{k + 1} = (\gamma_{k + 1} - \mu \bar{a}_{k + 1})^2 - (\gamma_{k + 1} - 2 \mu r a_{k + 1}) \gamma_{k + 1} \\= \mu (\mu \bar{a}_{k + 1}^2 - 2 \gamma_{k + 1} (\bar{a}_{k + 1} - r a_{k + 1}) \overset{\eqref{label_027}}{=} 2 \mu^2 \gamma_{k + 1}\left(\frac{\bar{a}_{k + 1}^2}{2 \gamma_{k + 1}} - \frac{r A_{k + 1}}{L_f} \right)
\end{gathered}
\end{equation}
Putting together \eqref{label_039} and \eqref{label_040} applying \eqref{label_034} completes the proof.
\end{proof}
Theorem~\ref{label_033} provides us with all the information we need to construct an optimal method. We have seen that if we can maintain $\Gamma_k \geq 0$ at every iteration, the convergence of the scheme is guaranteed. We obviously have $\Gamma_1 = 0$ and a sufficient condition for achieving convergence is thus $\Gamma_{k + 1} \geq \Gamma_k$, $k \geq 1$. The simplest way of achieving non-decreasing gaps via Theorem~\ref{label_033} is to enforce
\begin{align}
L_f \bar{a}^2_{k + 1} &= 2 r A_{k + 1} \gamma_{k + 1}, \label{label_041} \\
Y_{k + 1} &= r A_k (x_k - y_{k + 1}) + \frac{\bar{a}_{k + 1} \gamma_k}{\bar{\gamma}_{k + 1}} (v_k - y_{k + 1}) = 0. \label{label_044_raw}
\end{align}
Note that unlike the Fast Gradient Method~\cite{ref_014} and extensions such as ACGM~\cite{ref_009,ref_010}, we must have equality in \eqref{label_041} to account a priori for all possible algorithmic conditions. Equality in \eqref{label_041} leads to the following second degree equation
\begin{equation}
(L_f - \mu) a_{k + 1}^2 - 2 \mu a_{k + 1} - A_k \left( 2 \gamma_k - \frac{\mu^2 A_k}{L_f - \mu} \right) = 0, \quad k \geq 1,
\end{equation}
allowing a single positive solution
\begin{equation} \label{label_043}
a_{k + 1} = \frac{1}{L_f - \mu} \left( \gamma_k + \mu A_k + \sqrt{\gamma_k (\gamma_k + 2 L_f A_k)} \right), \quad k \geq 1.
\end{equation}

By refactoring \eqref{label_044_raw} we also obtain an update rule for points where the oracle is called
\begin{equation} \label{label_044}
y_{k + 1} = (r A_k \bar{\gamma}_{k + 1} + \bar{a}_{k + 1} \gamma_k)^{-1} \left( r A_k \bar{\gamma}_{k + 1} x_k + \bar{a}_{k + 1} \gamma_k v_k \right), \quad k \geq 1.
\end{equation}
Applying \eqref{label_044_raw} in \eqref{label_026} we have that
\begin{equation} \label{label_045}
v_{k + 1} = \frac{\gamma_k}{\bar{\gamma}_{k + 1}} v_k + \left(1 - \frac{\gamma_k}{\bar{\gamma}_{k + 1}} \right) y_{k + 1} - \frac{\bar{a}_{k + 1}}{\gamma_{k + 1}} B^{-1} g_{k + 1}, \quad k \geq 1.
\end{equation}
From \eqref{label_041} we also obtain via \eqref{label_040} that $\bar{\gamma}_{k + 1}^2 = \gamma_k \gamma_{k + 1}$, which enables us to further simplify \eqref{label_045} as
\begin{equation} \label{label_046}
v_{k + 1} = \frac{1}{\gamma_{k + 1}} (\bar{\gamma}_{k + 1} v_k - \bar{a}_{k + 1} ( B^{-1} g_{k + 1} - \mu y_{k + 1} ) ), \quad k \geq 1.
\end{equation}

Putting together the weight update in \eqref{label_043}, the estimate function curvature update in \eqref{label_017}, the auxiliary quantity updates in \eqref{label_027} and \eqref{label_034}, respectively, the auxiliary point update in \eqref{label_044} as well as the estimate function optima update in \eqref{label_046} we obtain Algorithm~\ref{label_047}.

\begin{algorithm}[h!]
\caption{A generalized Optimized Gradient Method}
\label{label_047}
\begin{algorithmic}[1]
\STATE \textbf{Input:} $B \succ 0$, $x_0 \in \mathbb{E}$, $\mu \geq 0$, $L_f > 0$, $A_1 \geq 0$, $\gamma_1 > 0$, $T \inn{\infty}$
\STATE $y_1 = x_0$
\STATE $g_{1} = f'(y_{1})$\\[1mm]
\STATE $x_1 = y_1 - \frac{1}{L_f} B^{-1} g_1$
\STATE Choose $v_1$ from among $x_1$ (recommended), $x_0$, or an additional input point
\FOR{$k = 1,\ldots{},T$}
\STATE $a_{k + 1} = \frac{1}{L_f - \mu} \left( \gamma_k + \mu A_k + \sqrt{\gamma_k (\gamma_k + 2 L_f A_k)} \right)$ \label{label_048} \\[1mm]
\STATE $A_{k + 1} = A_k + a_{k + 1}$ \\
\STATE $\gamma_{k + 1} = \gamma_k + 2 \mu r a_{k + 1}$
\STATE $\bar{a}_{k + 1} = r (a_{k + 1} + q A_{k + 1})$
\STATE $\bar{\gamma}_{k + 1} = \gamma_{k + 1} - \mu \bar{a}_{k + 1}$
\STATE $y_{k + 1} = (r A_k \bar{\gamma}_{k + 1} + \bar{a}_{k + 1} \gamma_k)^{-1} \left( r A_k \bar{\gamma}_{k + 1} x_k + \bar{a}_{k + 1} \gamma_k v_k \right)$\\[1mm]
\STATE $g_{k + 1} = f'(y_{k + 1})$
\STATE $x_{k + 1} = y_{k + 1} - \frac{1}{L_f} B^{-1} g_{k + 1}$
\STATE $v_{k + 1} = \frac{1}{\gamma_{k + 1}} (\bar{\gamma}_{k + 1} v_k - \bar{a}_{k + 1} ( B^{-1} g_{k + 1} - \mu y_{k + 1} ) )$
\ENDFOR
\end{algorithmic}
\end{algorithm}

\subsection{Convergence analysis}

Our method maintains the ESP in \eqref{label_018} and therefore the chain of inequalities in \eqref{label_019} holds. We again distinguish two cases.

When $\mu = 0$, we have $\bar{\gamma}_k = \gamma_k = \gamma_1$ and $\bar{a}_k = a_k$ for all $k \geq 2$. It can be proven by induction that $A_k \geq \frac{\gamma_1}{2 L_f} k^2$ for all $k \geq 2$ (see e.g. \cite{ref_001}). From \eqref{label_021} we have
\begin{equation} \label{label_049}
f(x_k) \leq f(y_k) - \frac{1}{2 L_f} \| g_k \|_*^2 \leq \frac{\mathcal{D}_1}{A_k} \leq \frac{L_f}{k^2} \bar{\mathcal{D}}_1, \quad k \geq 2,
\end{equation}
where
\begin{equation}
\bar{\mathcal{D}}_1 \overset{\operatorname{def}}{=} \frac{2 \mathcal{D}_1}{\gamma_1} = \frac{2 A_1}{\gamma_1} \left(f(x_0) - \frac{1}{2 L_f} \|f'(x_0)\|^2 - f^* \right) + \| v_1 - x^* \|^2.
\end{equation}
When $\mu > 0$ we need to impose the additional condition $\gamma_1 \geq 2 \mu r A_1$. Under these extra assumptions, we have from \eqref{label_041} that
\begin{equation} \label{label_050}
L_f \bar{a}_{k + 1}^2 = 2 r A_{k + 1} \gamma_{k + 1} \geq 2 r A_{k + 1} (2 \mu r A_{k + 1}) = 4 \mu r^2 A_{k + 1}^2, \quad k \geq 1.
\end{equation}
Using \eqref{label_027} and $A_{k + 1} > 0$ in \eqref{label_050} we obtain $\left(\frac{a_{k + 1}}{A_{k + 1}} + q\right)^2 \geq 4 q$, which ultimately gives
\begin{equation} \label{label_051}
A_{k + 1} \geq (1 - \sqrt{q})^{-2} A_k, \quad k \geq 1.
\end{equation}
We also have that $A_2 \geq a_2 \geq \frac{2 \gamma_1}{L_f - \mu}$ (equality is attained when $A_1 = 0$) and iterating \eqref{label_051} yields
\begin{equation} \label{label_052}
A_{k} \geq (1 - \sqrt{q})^{4 - 2 k} \frac{2 \gamma_1}{L_f - \mu}, \quad k \geq 2.
\end{equation}
The chain in \eqref{label_019} together with the rate in \eqref{label_052} provides a worst-case rate for the distance to the optimum of the iterates $v_k$ as
\begin{equation} \label{label_053}
\| v_k - x^* \|^2 \leq \frac{2 \mathcal{D}_1}{\gamma_k} \leq \frac{\mathcal{D}_1}{\mu r A_k} \leq \left(1 - \sqrt{q} \right)^{2 k - 4} \frac{(1 - q)^2}{4 q} \bar{\mathcal{D}}_1, \quad k \geq 2.
\end{equation}
The Lipschitz gradient property of $f$ immediately produces an optimal worst-case rate for function values as well, given by
\begin{equation} \label{label_054}
f(v_k) - f^* \leq \frac{L_f}{2} \| v_k - x^* \|^2 \leq \left(1 - \sqrt{q} \right)^{2 k - 4} \frac{(1 - q)^2 L_f}{8 q} \bar{\mathcal{D}}_1, \quad k \geq 2.
\end{equation}
Note that our method does not adaptively estimate the Lipschitz constant in \eqref{label_054} and the performance of our method when measured using the function residual is not competitive in practice with methods optimized for this criterion such as ACGM~\cite{ref_009}.

\subsection{Relationship with existing methods}

Our generalized Optimized Gradient Method is an umbrella method, encompassing the original OGM, ITEM and TMM. Specifically, the update rules in \eqref{label_043}, \eqref{label_044} and \eqref{label_046} under the restriction $A_1 = 0$ and $\gamma_1 = 1$ correspond exactly to the formulation of the Information Theoretical Exact Method (ITEM) found in \cite{ref_001}. Further setting $\mu = 0$ produces the online version of the Optimized Gradient Method (see \cite{ref_007} for a detailed discussion on online vs. offline mode in OGM). Note that the first gradient step iteration can be discarded when $A_1 = 0$ thereby increasing $k$ by $1$ in the \emph{right-hand sides} of \eqref{label_049}, \eqref{label_053} and \eqref{label_054}, respectively.

If $\mu > 0$ we can have $\gamma_1 = 2 \mu r A_1$, in which case the update rules in \eqref{label_043}, \eqref{label_044} and \eqref{label_046} become for all $k \geq 1$ those of the Triple Momentul Method~\cite{ref_019}, namely
\begin{align}
A_{k + 1} &= (1 - \sqrt{q})^{-2} A_k,\\
y_{k + 1} &= \frac{1 - \sqrt{q}}{1 + \sqrt{q}} \left(y_k - \frac{1}{L_f} B^{-1} g_k \right) + \frac{2 \sqrt{q}}{1 + \sqrt{q}} v_k,\\
v_{k + 1} &= (1 - \sqrt{q}) v_k + \sqrt{q} \left( y_{k + 1} - \frac{1}{\mu} B^{-1} g_{k + 1} \right).
\end{align}
Every TMM iteration requires the value of the gradient at the \emph{previous} iteration and therefore we cannot discard the first gradient step. This also explains why in this case $w_1$ has to be omitted from the aggregate bound $W_k$ in \eqref{label_013} of the estimate function $\psi_k$ in \eqref{label_014}.

The relationship between Algorithm~\ref{label_047} and the existing optimal methods is summarized in Table~\ref{label_055}.

\begin{table}[h]
\caption{Several optimal methods are instances of Algorithm~\ref{label_047} with restrictions applied}
\label{label_055}
\centering
\begin{tabular}{lcccc} \toprule
Algorithm & \multicolumn{4}{c}{Restriction} \\
& $\mu = 0$ & $\mu > 0$ & $A_1 = 0$ & $A_1 = 2 \mu r \gamma_1$ \\ \midrule
OGM~\cite{ref_011} & yes & no & yes & no \\
TMM~\cite{ref_019} & no & yes & no & yes \\
ITEM~\cite{ref_001} & no & no & yes & no \\ \bottomrule
\end{tabular}
\end{table}

\subsection{Augmentation}

It is possible to obtain Lyapunov (potential) functions from the estimate functions by means of augmentation~\cite{ref_010,ref_003}. We define the augmented estimate functions for all $x \in \mathbb{E}$, $g \in \mathbb{E}^*$ and $k \geq 1$ as
\begin{equation} \label{label_056}
\bar{\psi}_k(x, g) = \psi_k(x, g) + (A_k - A_1) (f^* - W_k(x^*, 0)).
\end{equation}
The corresponding augmented estimate sequence gap becomes $\bar{\Gamma}_k \overset{\operatorname{def}}{=} \bar{\psi}_k^* - A_k f(y_k)$, $k \geq 1$. The functions $w_k$, $k \geq 2$, are lower bounds on the objective at $(x^*, 0)$ and hence the augmented estimate sequence property, stated as $\bar{\Gamma}_k \geq 0$, constitutes a relaxation of the ESP in \eqref{label_018}. We establish a connection between the estimate sequence and potential functions using the following result.
\begin{lemma} \label{label_057}
The augmented estimate sequence gap can be expressed as the difference between two gap sequence terms:
\begin{equation}
\bar{\Gamma}_k = \mathcal{D}_1 - \mathcal{D}_k, \quad k \geq 1,
\end{equation}
where the gap sequence terms are defined as
\begin{equation} \label{label_058}
\mathcal{D}_k \overset{\operatorname{def}}{=} A_k(f(y_k) - f^* - \hat{w}_k(x^*)) + \frac{\gamma_k}{2} \| v_k - x^* \|^2 , \quad k \geq 1.
\end{equation}
\end{lemma}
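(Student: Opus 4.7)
The plan is to compute $\bar{\psi}_k^*$ explicitly by exploiting the fact that the augmentation term $(A_k - A_1)(f^* - W_k(x^*,0))$ in \eqref{label_056} is a constant with respect to $(x,g)$, and then to recognize the result as $\mathcal{D}_1 - \mathcal{D}_k$ after $A_k f(y_k)$ is subtracted. All the genuine work is algebraic bookkeeping; there is no inequality to establish.

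First I would observe that since the augmentation is independent of the minimization variables, $\bar{\psi}_k$ and $\psi_k$ share the same minimizer $(v_k, s_k)$, and
\begin{equation*}
\bar{\psi}_k^* = \psi_k^* + (A_k - A_1)(f^* - W_k(x^*, 0)).
\end{equation*}
Next I would pin down $\psi_k^*$ by evaluating the two available expressions for $\psi_k(x^*, 0)$: on the one hand, the canonical form \eqref{label_016} with $g = 0$ gives $\psi_k(x^*, 0) = \psi_k^* + \tfrac{\gamma_k}{2}\|v_k - x^*\|^2$; on the other, the definition \eqref{label_014} evaluated at $(x^*, 0)$ reads
\begin{equation*}
\psi_k(x^*, 0) = (A_k - A_1) W_k(x^*, 0) + A_k \hat{w}_k(x^*) + A_1(f(y_1) - \hat{w}_1(x^*)) + \tfrac{\gamma_1}{2}\|v_1 - x^*\|^2.
\end{equation*}
Equating these and solving for $\psi_k^*$ yields a closed-form expression in which the $W_k(x^*, 0)$ contribution appears with coefficient $(A_k - A_1)$.

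Substituting that expression into the formula for $\bar{\psi}_k^*$, the $(A_k - A_1) W_k(x^*, 0)$ term cancels against its counterpart in the augmentation, leaving
\begin{equation*}
\bar{\psi}_k^* = A_k \hat{w}_k(x^*) + (A_k - A_1) f^* + A_1(f(y_1) - \hat{w}_1(x^*)) + \tfrac{\gamma_1}{2}\|v_1 - x^*\|^2 - \tfrac{\gamma_k}{2}\|v_k - x^*\|^2.
\end{equation*}
Subtracting $A_k f(y_k)$ and regrouping by index, the terms with subscript $1$ assemble into $\mathcal{D}_1$ (using the definition just below \eqref{label_019}) while the terms with subscript $k$ assemble into $-\mathcal{D}_k$ per \eqref{label_058}, which is the claimed identity.

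I expect no serious obstacle: the whole argument is a purely algebraic identity, and the only potentially delicate point is making sure that the constant-shift argument for $\bar{\psi}_k^*$ is legitimate, which it is because the augmentation does not depend on $(x, g)$. The mild subtlety worth flagging explicitly in the write-up is that $W_k(x^*, 0)$ is the quantity that gets cancelled by the augmentation, which is precisely what converts the estimate-function viewpoint into a potential-function viewpoint and motivates the definition \eqref{label_056} in the first place.
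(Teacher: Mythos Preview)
Your proposal is correct and follows essentially the same route as the paper: both evaluate $\psi_k(x^*,0)$ via the canonical form \eqref{label_016} and via the definition \eqref{label_014}, combine the two to eliminate $(A_k-A_1)W_k(x^*,0)$ against the augmentation term, and then regroup into $\mathcal{D}_1-\mathcal{D}_k$. The only cosmetic difference is that you first isolate $\bar{\psi}_k^*$ and then subtract $A_k f(y_k)$, whereas the paper writes $\bar{\Gamma}_k$ directly; the algebra is identical.
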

\begin{proof}
Herein we consider all $k \geq 1$. From \eqref{label_056} we get
\begin{equation} \label{label_059}
\bar{\Gamma}_k = \psi_k^* + (A_k - A_1)(f^* - W_k(x^*, 0)) - A_k f(y_k).
\end{equation}
From the definition of the estimate function in \eqref{label_014} as well as the canonical form in \eqref{label_016} we have
\begin{equation} \label{label_060}
\begin{gathered}
\psi_k^* + \frac{\gamma_k}{2} \| x^* - v_k \|^2 \overset{\eqref{label_016}}{=} \psi_k(x^*, 0) \overset{\eqref{label_014}}{=} (A_k - A_1) W_k(x^*, 0) + A_k \hat{w}_k(x^*) \\ + A_1 (f(y_1) - \hat{w}_1(x^*)) + \frac{\gamma_1}{2} \| x^* - v_1 \|^2.
\end{gathered}
\end{equation}
Combining \eqref{label_059} and \eqref{label_060} yields
\begin{equation}
\begin{gathered}
\bar{\Gamma}_k = (A_k - A_1)f^* + A_k \hat{w}_k(x^*) + A_1 (f(y_1) - \hat{w}_1(x^*)) \\ + \frac{\gamma_1}{2} \| x^* - v_1 \|^2 - \frac{\gamma_k}{2} \| x^* - v_k \|^2 - A_k f(y_k) \overset{\eqref{label_058}}{=} \mathcal{D}_1 - \mathcal{D}_k.
\end{gathered}
\end{equation}
\end{proof}
The gap sequence terms are the potential functions that were previously used in \cite{ref_001} to analyze the convergence of ITEM and TMM, but not to derive them. In the sequel, we study the behavior of this sequence when their parameters are updated based on our estimate function framework
\begin{proposition} \label{label_061}
A first-order method that updates the state parameters according to \eqref{label_017} and \eqref{label_026} satisfies
\begin{equation} \label{label_062}
\begin{gathered}
\mathcal{D}_{k} - \mathcal{D}_{k + 1} \geq \frac{\gamma_{k}}{2} \| v_{k} - y_{k + 1} \|^2 - \frac{\gamma_{k + 1}}{2} \| v_{k + 1} - y_{k + 1} \|^2 \\ + r A_k \langle g_{k + 1}, x_k - y_{k + 1} \rangle + \frac{r A_{k + 1}}{L_f} \| g_{k + 1} \|_*^2, \quad k \geq 1.
\end{gathered}
\end{equation}
\end{proposition}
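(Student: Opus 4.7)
The plan is to exploit Lemma~\ref{label_057}, which identifies $\mathcal{D}_{k} - \mathcal{D}_{k + 1}$ with the augmented-gap increment $\bar{\Gamma}_{k + 1} - \bar{\Gamma}_{k}$, and then reduce this augmented increment to the plain gap increment $\Gamma_{k + 1} - \Gamma_{k}$ already controlled by \eqref{label_032}. First I would apply Lemma~\ref{label_057} to both $\mathcal{D}_k$ and $\mathcal{D}_{k + 1}$ to rewrite the left-hand side as $\bar{\Gamma}_{k + 1} - \bar{\Gamma}_{k}$.

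Next I would unpack the augmentation. Since the correction $(A_k - A_1)(f^* - W_k(x^*, 0))$ in \eqref{label_056} does not depend on $(x, g)$, it passes through the minimization intact, giving $\bar{\psi}_k^* = \psi_k^* + (A_k - A_1)(f^* - W_k(x^*, 0))$ and consequently $\bar{\Gamma}_k = \Gamma_k + (A_k - A_1)(f^* - W_k(x^*, 0))$. Using the aggregation rule \eqref{label_013}, which gives $(A_{k + 1} - A_1) W_{k + 1}(x^*, 0) = (A_k - A_1) W_k(x^*, 0) + a_{k + 1} w_{k + 1}(x^*, 0)$, the telescoping cleanly isolates
\begin{equation*}
\bar{\Gamma}_{k + 1} - \bar{\Gamma}_k = (\Gamma_{k + 1} - \Gamma_k) + a_{k + 1}\bigl(f^* - w_{k + 1}(x^*, 0)\bigr).
\end{equation*}

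It then remains to lower-bound each summand. The first summand is handled by \eqref{label_032}, which was derived precisely under the hypotheses \eqref{label_017} and \eqref{label_026} and whose right-hand side is exactly the right-hand side of \eqref{label_062}. The second summand is nonnegative: since $f'(x^*) = 0$ and $T_{L_f}(x^*) = x^*$, invoking \eqref{label_009} at $y = x^*$ yields $f^* = f(x^*) \geq w_{k + 1}(T_{L_f}(x^*), f'(x^*)) = w_{k + 1}(x^*, 0)$. Adding the two contributions delivers \eqref{label_062}.

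The only point requiring care is the bookkeeping in the second step: one must verify that the $A_1(f(y_1) - \hat{w}_1(x))$ term embedded in $\psi_k$ and the $(A_k - A_1)$ prefactor in \eqref{label_056} line up so that nothing beyond the single $a_{k + 1} w_{k + 1}(x^*, 0)$ contribution survives after telescoping. Everything else is an immediate consequence of results already established earlier in the excerpt, so I would not expect any genuinely hard step.
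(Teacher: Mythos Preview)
Your argument is correct. The identity $\mathcal{D}_k-\mathcal{D}_{k+1}=\bar{\Gamma}_{k+1}-\bar{\Gamma}_k$ follows from Lemma~\ref{label_057}; the telescoping via \eqref{label_013} indeed leaves exactly $a_{k+1}\bigl(f^*-w_{k+1}(x^*,0)\bigr)$, which is nonnegative by \eqref{label_009} applied at $y=x^*$; and \eqref{label_032} supplies the required lower bound on $\Gamma_{k+1}-\Gamma_k$. Since assuming \eqref{label_017} and \eqref{label_026} lets one define $\psi_k^*$ through the constant-term matching in \eqref{label_024} (with $\psi_1^*=A_1 f(y_1)$), all the estimate-function machinery you invoke is legitimately available.

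The paper, however, takes a different route. It proves \eqref{label_062} from scratch, without passing through $\Gamma_k$ or $\bar{\Gamma}_k$: it writes the nonnegative combination $A_k\mathcal{R}_{k+1}(y_k,g_k)+a_{k+1}\mathcal{R}_{k+1}(x^*,0)$ of residuals $\mathcal{R}_k(y,g)=f(y)-w_k(T_{L_f}(y),g)$, adds $\frac{\gamma_k}{2}\|x^*-v_k\|^2-\frac{\gamma_{k+1}}{2}\|x^*-v_{k+1}\|^2$ to both sides, and expands everything as a quadratic in $x^*$; the updates \eqref{label_017} and \eqref{label_028} then force the quadratic and linear coefficients in $x^*$ to vanish, and the remaining constant term is massaged into the right-hand side of \eqref{label_062}. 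Your approach is shorter and makes the equivalence with \eqref{label_032} transparent, essentially deriving Proposition~\ref{label_061} as a corollary of the estimate-sequence analysis. The paper's direct computation is longer but self-contained, which serves its stated purpose of showing that the potential-function argument independently reproduces the same bound rather than inheriting it from the estimate sequence.
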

\begin{proof}
Our reasoning follows the same structure as the proofs of \cite[Theorem 1]{ref_009} and \cite[Theorem 1]{ref_010}. Within this proof we consider all $k \geq 1$. We define the residual $\mathcal{R}_k(y, g) \overset{\operatorname{def}}{=} f(y) - w_k(T_{L_f}(y), g)$ for all $y \in \mathbb{E}$. We have $\mathcal{R}_{k + 1}(y_k, f'(y_{k})) \geq 0$ and $\mathcal{R}_{k + 1}(x^*, 0) \geq 0$. The non-negativity of $A_k$ and $a_{k + 1}$ implies that
\begin{equation} \label{label_063}
A_k \mathcal{R}_{k + 1}(y_k, f'(y_{k})) + a_{k + 1} \mathcal{R}_{k + 1}(x^*, 0) \geq 0.
\end{equation}
Expanding and regrouping terms in \eqref{label_063} gives
\begin{equation} \label{label_064}
\begin{gathered}
A_k f(y_k) + a_{k + 1} f^* \geq A_{k + 1} f(y_{k + 1}) \\+ \left\langle g_{k + 1}, A_k \left(y_k - \frac{1}{L_f} B^{-1} g_k \right) + a_{k + 1} x^* - A_{k + 1} y_{k + 1} \right\rangle \\+ \frac{\mu r A_k}{2} \| x_k - x_{k + 1} \|^2 + \frac{\mu r a_{k + 1}}{2} \| x^* - x_{k + 1} \|^2 + \frac{A_{k}}{2 L_f} \| g_k \|_*^2 + \frac{A_{k + 1}}{2 L_f} \| g_{k + 1} \|_*^2.
\end{gathered}
\end{equation}
Adding $\frac{\gamma_k}{2} \| x^* - v_k \|^2 - \frac{\gamma_{k + 1}}{2} \| x^* - v_{k + 1} \|^2$ to both sides of \eqref{label_064} and arranging terms gives
\begin{equation} \label{label_065}
\begin{gathered}
\mathcal{D}_k - \mathcal{D}_{k + 1} \geq \frac{\gamma_k}{2} \| x^* - v_k \|^2 - \frac{\gamma_{k + 1}}{2} \| x^* - v_{k + 1} \|^2 + \frac{A_{k + 1}}{L_f} \| g_{k + 1} \|_*^2 \\+ \langle g_{k + 1}, A_k x_k + a_{k + 1} x^* - A_{k + 1} y_{k + 1} \rangle \\+ \frac{\mu r A_k}{2} \left(\| x_k - x_{k + 1} \|^2 - \| x_k - x^* \|^2 \right) + \frac{\mu r (A_{k + 1} + a_{k + 1})}{2} \| x_{k + 1} - x^* \|^2.
\end{gathered}
\end{equation}
We regroup the terms in the right-hand side of \eqref{label_065} around powers of $x^*$ and obtain
\begin{equation}
\mathcal{D}_k - \mathcal{D}_{k + 1} \geq \mathcal{C}_k^{(2)} \| x^* \|^2 + \left\langle B \mathcal{C}_k^{(1)}, x^* \right\rangle + \mathcal{C}_k^{(0)} ,
\end{equation}
where $\mathcal{C}_k^{(2)} \overset{\operatorname{def}}{=} \gamma_k - \gamma_{k + 1} - \mu r A_k + \mu r (A_{k + 1} + a_{k + 1})$,
\begin{equation} \label{label_066}
\mathcal{C}_k^{(1)} \overset{\operatorname{def}}{=} \gamma_{k + 1} v_{k + 1} - \gamma_k v_k + a_{k + 1} B^{-1} g_{k + 1}
+ \mu r A_k x_k - \mu r (A_{k + 1} + a_{k + 1}) x_{k + 1},
\end{equation}
\begin{equation} \label{label_067}
\begin{gathered}
\mathcal{C}_k^{(0)} \overset{\operatorname{def}}{=} \mu r A_{k + 1} \| x_{k + 1} \|^2 - \mu r A_k \langle B x_k, x_{k + 1} \rangle + \langle g_{k + 1}, A_k x_k - A_{k + 1} y_{k + 1} \rangle \\ + \frac{\gamma_k}{2} \| v_k \|^2 - \frac{\gamma_{k + 1}}{2} \| v_{k + 1} \|^2 + \frac{A_{k + 1}}{L_f} \| g_{k + 1} \|_*^2.
\end{gathered}
\end{equation}
The update in \eqref{label_017} implies that $\mathcal{C}_k^{(2)} = 0$ whereas \eqref{label_026} is equivalent to \eqref{label_028}. Combining \eqref{label_028} with \eqref{label_066} gives $\mathcal{C}_k^{(1)} = 0$.

Based on \eqref{label_003}, we perform the expansion:
\begin{equation} \label{label_067_1}
\begin{gathered}
\mu r A_{k + 1} \| x_{k + 1} \|^2 - \mu r A_k \langle B x_k, x_{k + 1} \rangle = \mu r A_{k + 1} \|y_{k + 1} \|^2 + \frac{q r A_{k + 1}}{L_f} \| g_{k + 1} \|_*^2 \\- 2 q r A_{k + 1} \langle g_{k + 1}, y_{k + 1}\rangle - \mu r A_k \langle B x_k, y_{k + 1} \rangle + q r A_k \langle g_{k + 1}, x_k \rangle.
\end{gathered}
\end{equation}
We also use the form in \eqref{label_029} of \eqref{label_026} to perform another expansion:
\begin{equation} \label{label_069}
\begin{gathered}
\frac{\gamma_{k}}{2} \| v_{k} - y_{k + 1} \|^2 - \frac{\gamma_{k + 1}}{2} \| v_{k + 1} - y_{k + 1} \|^2 = \frac{\gamma_k}{2} \| v_k \|^2 - \frac{\gamma_{k + 1}}{2} \| v_{k + 1} \|^2 - \mu r a_{k + 1} \| y_{k + 1} \|^2 \\
- \langle \bar{a}_{k + 1} g_{k + 1}, y_{k + 1} \rangle + \mu r (A_{k + 1} + a_{k + 1}) \| y_{k + 1} \|^2 - \mu r A_k \langle B x_k, y_{k + 1} \rangle.
\end{gathered}
\end{equation}
Putting together \eqref{label_067}, \eqref{label_067_1} and \eqref{label_069} we obtain
\begin{equation}
\begin{gathered}
\mathcal{C}_k^{(0)} = \frac{\gamma_{k}}{2} \| v_{k} - y_{k + 1} \|^2 - \frac{\gamma_{k + 1}}{2} \| v_{k + 1} - y_{k + 1} \|^2 + r A_k \langle g_{k + 1}, x_k - y_{k + 1} \rangle \\ + (r A_k - 2 q r A_{k + 1} + \bar{a}_{k + 1} - A_{k + 1}) \langle g_{k + 1}, y_{k + 1} \rangle + \frac{r A_{k + 1}}{L_f} \| g_{k + 1} \|_*^2.
\end{gathered}
\end{equation}
Finally, \eqref{label_027} and $1 + rq = r$ imply that $r A_k - 2 q r A_{k + 1} + \bar{a}_{k + 1} - A_{k + 1} = 0$.
\end{proof}
The right-hand side of \eqref{label_062} is identical to the corresponding one in the simple inequality \eqref{label_032}. Using the proof mechanism of Theorem~\ref{label_033}, we immediately obtain the Lyapunov (potential) function counterpart of Theorem~\ref{label_033}.
\begin{corollary} \label{label_070}
For any first order method that uses the updates \eqref{label_017} and \eqref{label_026}, Proposition~\ref{label_061} implies that
\begin{equation}
\begin{gathered}
\mathcal{D}_{k} - \mathcal{D}_{k + 1} \geq
\left( \frac{r A_{k + 1}}{L_f} - \frac{\bar{a}_{k + 1}^2}{2 \gamma_{k + 1}} \right)
\left( \| g_{k + 1} \|_*^2 - \frac{\mu^2 \gamma_k \gamma_{k + 1}}{\bar{\gamma}^2_{k + 1}} \| v_k - y_{k + 1} \|^2 \right) \\
+ \left\langle \frac{\bar{\gamma}_{k + 1}}{\gamma_{k + 1}} g_{k + 1} + \mu B \left( \frac{\gamma_k}{\bar{\gamma}_{k + 1}} (v_k - y_{k + 1}) - \frac{\mu}{2 \gamma_{k + 1}} Y_{k + 1} \right), Y_{k + 1} \right\rangle, \quad k \geq 1,
\end{gathered}
\end{equation}
where $\bar{a}_{k + 1}$, $\bar{\gamma}_{k + 1}$ and $Y_{k + 1}$ are defined using \eqref{label_027}, \eqref{label_034} and \eqref{label_035}, respectively.
\end{corollary}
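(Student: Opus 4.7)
My plan is to exploit the key observation that the right-hand side of inequality \eqref{label_062} in Proposition~\ref{label_061} is syntactically identical to the right-hand side of inequality \eqref{label_032} that was used to launch the proof of Theorem~\ref{label_033}. Since Proposition~\ref{label_061} already provides a lower bound on $\mathcal{D}_{k} - \mathcal{D}_{k+1}$ in terms of exactly this common expression, the remainder of the argument reduces to replaying, verbatim, the algebraic rewriting of that expression carried out in the proof of Theorem~\ref{label_033}.

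Concretely, I would start from \eqref{label_062} and invoke the definitions \eqref{label_034} of $\bar{\gamma}_{k+1}$ and \eqref{label_035} of $Y_{k+1}$ to rewrite $rA_k(x_k - y_{k+1})$ via the identity \eqref{label_036}. Substituting this into the update \eqref{label_026} yields the analogue of \eqref{label_037}, expressing $\gamma_{k+1}(v_{k+1} - y_{k+1})$ in terms of $v_k - y_{k+1}$, $Y_{k+1}$, and $B^{-1}g_{k+1}$. Expanding the squared norm $\|v_{k+1} - y_{k+1}\|^2$ using this expression and regrouping terms using \eqref{label_035} produces the analogue of \eqref{label_039} with $\mathcal{D}_k - \mathcal{D}_{k+1}$ on the left-hand side. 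Finally, applying the identity \eqref{label_040}, namely $\bar{\gamma}_{k+1}^2 - \gamma_k\gamma_{k+1} = 2\mu^2 \gamma_{k+1}\bigl(\bar{a}_{k+1}^2/(2\gamma_{k+1}) - rA_{k+1}/L_f\bigr)$, merges the coefficient of $\|v_k - y_{k+1}\|^2$ with the coefficient of $\|g_{k+1}\|_*^2$ into the single factored form displayed in the statement.

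There is no genuine obstacle: the algebraic manipulations from \eqref{label_036} through \eqref{label_040} depend only on the relations \eqref{label_017}, \eqref{label_026}, \eqref{label_027}, and \eqref{label_034}, all of which are either definitions or hypotheses of Corollary~\ref{label_070}. Nowhere in that chain is the estimate sequence property or the specific meaning of $\Gamma_k$ invoked — only the update mechanics are used. Hence Corollary~\ref{label_070} follows by literal substitution of $\mathcal{D}_k - \mathcal{D}_{k+1}$ for $\Gamma_{k+1} - \Gamma_k$ in the closing steps of the proof of Theorem~\ref{label_033}, justifying the paper's assertion that the corollary is obtained ``immediately'' via the same proof mechanism.
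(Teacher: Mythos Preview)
Your proposal is correct and matches the paper's approach exactly: the paper explicitly notes that the right-hand side of \eqref{label_062} coincides with that of \eqref{label_032} and obtains the corollary ``immediately'' by reusing the proof mechanism of Theorem~\ref{label_033}. Your write-up simply spells out that mechanism in more detail, correctly observing that the algebraic chain \eqref{label_036}--\eqref{label_040} depends only on the update rules \eqref{label_017}, \eqref{label_026} and the definitions \eqref{label_027}, \eqref{label_034}, \eqref{label_035}, not on the particular meaning of the left-hand side.
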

Unlike in \cite{ref_007}, where the potential function analysis yielded better guarantees than the estimate functions, here we see that identical results are obtained using both approaches, noting that the estimate sequence is superior both in terms of making algorithm construction straightforward but also, as we shall see in the sequel, in adding memory and adaptivity to the algorithm.

\section{An Optimized Gradient Method with Memory for strongly convex objectives}

To apply the bundle to our generalized Optimized Gradient Gethod in Algorithm~\ref{label_047}, we first need to re-center the primal term in the bound $w_k(x, g)$ defined in \eqref{label_008} around the fixed starting point $v_1$ for all $x \in \mathbb{E}$, $g \in \mathbb{E}^*$, $k \geq 1$ as
\begin{equation} \label{label_071}
\begin{gathered}
w_k(x, g) = \bar{h}_k + \langle \bar{g}_k, x \rangle + \frac{\mu r}{2} \| x - v_1 \|^2 + \frac{1}{2 L_f} \| g \|_*^2,
\end{gathered}
\end{equation}
where
\begin{align}
\bar{h}_k & \overset{\operatorname{def}}{=} f(y_k) - \langle g_k, y_k \rangle + \frac{\mu r}{2} \left( \| x_k \|^2 - \| v_1 \|^2 \right) + \frac{1}{2 L_f} \| g_k \|_*^2, \label{label_072}\\
\bar{g}_k & \overset{\operatorname{def}}{=} g_k + \mu r B (v_1 - x_k). \label{label_073}
\end{align}
More details on how methods with memory can be constructed to take advantage of strong convexity, including the re-centering of the bounds, can be found in \cite{ref_006}.

Next, at the beginning of every iteration $k \geq 1$, we consider a model $(H_k, G_k)$, $H_k \in \mathbb{R}^{m_k}$, $G_k \in \mathbb{R}^{m_k} \times \mathbb{E}^*$, containing $m_k$ entries, such that
\begin{equation} \label{label_074}
f(y) \geq \max\{H_k + G_k^T y \} + \frac{\mu r}{2} \| x - v_1 \|^2 + \frac{1}{2 L_f} \| f'(y) \|_*^2 , \quad y \in \mathbb{R}^n,
\end{equation}
where $\max$ denotes element-wise maximum. The reformulation in \eqref{label_071} suggests a means of constructing such a model.

\begin{proposition}
If for every $i \inn{m_k}$ there exists a vector of weights $\mathcal{T}_i \in \Delta_{k}$ (where $\Delta_p$ is the p-dimensional simplex for any $p \geq 1$) that constructs our model entry $i$ as $(H)_i = \sum_{j = 1}^{k} (\mathcal{T}_i)_j \bar{h}_j$ and $(G)_i = \sum_{j = 1}^{k} (\mathcal{T}_i)_j \bar{g}_j$, then \eqref{label_074} holds.
\end{proposition}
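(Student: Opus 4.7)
The plan is to derive \eqref{label_074} by aggregating the per-oracle primal-dual bounds \eqref{label_009} via a convex combination argument. Using the re-centered form \eqref{label_071} to rewrite \eqref{label_009}, for each $j \inn{k}$,
\[
f(y) \geq \bar{h}_j + \langle \bar{g}_j, T_{L_f}(y) \rangle + \frac{\mu r}{2} \| T_{L_f}(y) - v_1 \|^2 + \frac{1}{2 L_f} \| f'(y) \|_*^2, \quad y \in \mathbb{E}.
\]
The structural observation that drives the whole proof is that the quadratic and dual-norm terms on the right-hand side do not depend on $j$; only the scalar $\bar{h}_j$ and the dual vector $\bar{g}_j$ (which appear affinely in $T_{L_f}(y)$) do.

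For each row $i \inn{m_k}$, I would then take the convex combination of these $k$ inequalities with weights $(\mathcal{T}_i)_j$. Since $\mathcal{T}_i \in \Delta_k$ means $\sum_{j=1}^{k}(\mathcal{T}_i)_j = 1$, the two $j$-independent terms pass through the weighted sum unchanged, while the affine parts collapse exactly to $(H_k)_i + \langle (G_k)_i, T_{L_f}(y) \rangle$ by the definitions given in the proposition. This yields, for every $i$,
\[
f(y) \geq (H_k)_i + \langle (G_k)_i, T_{L_f}(y) \rangle + \frac{\mu r}{2} \| T_{L_f}(y) - v_1 \|^2 + \frac{1}{2 L_f} \| f'(y) \|_*^2.
\]
Because this holds row by row, taking the element-wise maximum over $i$ preserves the inequality, which, upon identifying $x$ with $T_{L_f}(y)$ (as is forced by the $\|x - v_1\|^2$ term in \eqref{label_074}), delivers the claimed bound.

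There is no real obstacle here beyond careful bookkeeping: the proof is just the remark that a convex combination of valid lower bounds is still a valid lower bound, and the maximum of valid lower bounds is still a valid lower bound. The only point worth flagging is that the hypothesis $\mathcal{T}_i \in \Delta_k$ (probability weights) is indispensable rather than cosmetic, since this is exactly what allows the strongly convex regularizer $\frac{\mu r}{2}\|T_{L_f}(y) - v_1\|^2$ and the co-coercivity term $\frac{1}{2L_f}\|f'(y)\|_*^2$ to be pulled out of the combination without acquiring a $\sum_j (\mathcal{T}_i)_j$ prefactor; were the weights only non-negative, the aggregation would not reproduce the structure of \eqref{label_074}.
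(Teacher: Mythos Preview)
Your proposal is correct and follows essentially the same argument as the paper: both proofs instantiate the per-oracle lower bound in the re-centered form \eqref{label_071}, take the convex combination with weights $(\mathcal{T}_i)_j$, and use $\sum_j (\mathcal{T}_i)_j = 1$ to pull the $j$-independent quadratic and dual-norm terms through unchanged. You are slightly more explicit than the paper in spelling out the identification $x = T_{L_f}(y)$ and the final element-wise maximum over $i$, but the route is the same.
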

\begin{proof}
The global lower bound property of $w_k$ can be expressed using \eqref{label_072} and \eqref{label_073}, respectively, in the following form:
\begin{equation} \label{label_075}
f(y) \geq \bar{h}_k + \langle \bar{g}_k, x \rangle + \frac{\mu r}{2} \| x - v_1 \|^2 + \frac{1}{2 L_f} \| f'(y) \|_*^2, \quad y \in \mathbb{E}, \quad k \geq 1.
\end{equation}
Because $(\mathcal{T}_i)_j \geq 0$ for all $i \inn{m_k}$, $j \inn{k}$, we can aggregate \eqref{label_075} for all $y \in \mathbb{E}$, $i \inn{m_k}$ and $k \geq 1$ as:
\begin{equation}
\begin{gathered}
\sum_{j = 1}^{k} (\mathcal{T}_i)_j f(y) \geq \sum_{j = 1}^{k} (\mathcal{T}_i)_j \bar{h}_j + \left\langle \sum_{j = 1}^{k} (\mathcal{T}_i)_j \bar{g}_j, x \right\rangle
\\ + \sum_{j = 1}^{k} (\mathcal{T}_i)_j \left( \frac{\mu r}{2} \| x - v_1 \|^2 + \frac{1}{2 L_f} \| f'(y) \|_*^2 \right).
\end{gathered}
\end{equation}
Using $ \sum_{j = 1}^{k} (\mathcal{T}_i)_j = 1$ for all $i \inn{m_k}$ yields the desired result.
\end{proof}
We generalize the estimate function in \eqref{label_014} to include the arbitrary weights $\lambda \in \Delta_{m_k}$ and the possibility to increase the convergence guarantee $A_k$ in variable $A$ as follows:
\begin{equation} \label{label_076}
\begin{gathered}
\psi_k(x, g, A, \lambda) = (A - A_1) \langle H_k + G_k^T x, \lambda \rangle + \frac{\mu r (A - A_1)}{2} \| x - v_1 \|^2 \\ + \frac{A - A_1}{2 L_f} \| g \|_*^2 + A \hat{w}_k(x) + A_1 (f(y_1) - \hat{w}_1(x)) + \frac{\gamma_1}{2} \| x - v_1 \|^2, \quad k \geq 1.
\end{gathered}
\end{equation}
We still need to satisfy the ESP in \eqref{label_018}, and hence consider the normalized variant of \eqref{label_076} implicitly parameterized by $A$ and $\lambda$ as $\omega_k(x, g, A, \lambda) \overset{\operatorname{def}}{=} \frac{1}{A} \psi_k(x, g, A, \lambda)$. We re-center $\hat{w}_k$ around $y_1$ and obtain
\begin{equation} \label{label_077}
\hat{w}_k(x) = \hat{h}_k + \langle \hat{g}_k, x \rangle + \frac{\mu r}{2} \| x - v_1 \|^2, \quad k \geq 1,
\end{equation}
where
\begin{equation} \label{label_078}
\hat{h}_k \overset{\operatorname{def}}{=} \frac{\mu r}{2} ( \| x_k \|^2 - \| v_1 \|^2 ) + \frac{1}{2 L_f} \| g_k \|_*^2, \quad
\hat{g}_k \overset{\operatorname{def}}{=} \mu r B ( v_1 - x_k ) , \quad k \geq 1.
\end{equation}
In the remainder of this section we consider all $k \geq 1$, unless specified otherwise.

Using the form in \eqref{label_077}, we refactor the normalized estimate function (NEF) $\omega_k$ to take the form
\begin{equation} \label{label_079}
\begin{gathered}
\omega_k(x, g, A, \lambda) = \frac{A - A_1}{A} \left( \langle H_k + G_k^T x, \lambda \rangle + \frac{1}{2 L_f} \| g \|_*^2 \right) \\ + \hat{h}_k + \langle \hat{g}_k, x \rangle + \frac{A_1}{A} \left( f(y_1) - \hat{h}_1 - \langle \hat{g}_1, x \rangle \right) + \frac{\gamma(A)}{2 A} \| x - v_1 \|^2,
\end{gathered}
\end{equation}
where $\gamma(A) \overset{\operatorname{def}}{=} \gamma_1 + 2 \mu r (A - A_1)$. Strong duality holds for $\omega_k$ and we have the optimum over all variables except $A$ given by
\begin{equation} \label{label_080}
\omega_k^*(A) = \max_{\lambda \in \Delta_{m_k}} \min_{x \in \mathbb{E}, g \in \mathbb{E}^*} \omega_k(x, g) = \max_{\lambda \in \Delta_{m_k}} \omega_k(v_k(A, \lambda), 0, A, \lambda),
\end{equation}
where the optimal point $v_k(A, \lambda)$ along with the subexpressions $\rho_k(A, \lambda)$ and $\nu_k(A)$ are, respectively, defined as
\begin{gather}
v_k(A, \lambda) \overset{\operatorname{def}}{=} v_1 - \frac{A}{\gamma(A)} B^{-1} \rho_k(A, \lambda), \label{label_081}\\
\rho_k(A, \lambda) \overset{\operatorname{def}}{=} \frac{A - A_1}{A} G_k \lambda + \nu_k(A), \quad \nu_k(A) \overset{\operatorname{def}}{=} \hat{g}_k - \frac{A_1}{A} \hat{g}_1. \label{label_082}
\end{gather}
The optimal value is thus given by
\begin{equation} \label{label_079_star}
\begin{gathered}
\omega_k(v_k(A, \lambda), 0, A, \lambda) = \frac{A - A_1}{A} \langle H_k, \lambda \rangle + \hat{h}_k \\ + \frac{A_1}{A} (f(y_1) - \hat{h}_1) + \langle \rho_k(A, \lambda), v_1 \rangle - \frac{A}{2 \gamma(A)} \| \rho_k(A, \lambda) \|_*^2.
\end{gathered}
\end{equation}
Expanding \eqref{label_079_star} using \eqref{label_081} and \eqref{label_082} we obtain
\begin{equation} \label{label_084}
\omega_k^*(A) = \max_{\lambda \in \Delta_{m_k}} - \frac{P(A)}{2} \lambda^T Q_k \lambda + \langle S_k(A), \lambda \rangle + R_k(A),
\end{equation}
where
\begin{gather}
Q_k \overset{\operatorname{def}}{=} G_k^T B^{-1} G_k, \quad P(A) \overset{\operatorname{def}}{=} \frac{(A - A_1)^2}{A \gamma(A)}, \label{label_085}\\
S_k(A) \overset{\operatorname{def}}{=} \frac{A - A_1}{A} (H_k + G_k^T v_1) - \frac{A - A_1}{\gamma(A)} G_k^T B^{-1} \nu_k(A), \label{label_086}\\
R_k(A) \overset{\operatorname{def}}{=} \hat{h}_k + \frac{A_1}{A} (f(y_1) - \hat{h}_1) + \langle \nu_k(A), v_1 \rangle - \frac{A}{2 \gamma(A)} \| \nu_k(A) \|_*^2 . \label{label_087}
\end{gather}
The auxiliary problem in \eqref{label_084} is a Quadratic Program under a simplex constraint that does not admit a known closed form solution. Many methods exist that solve \eqref{label_084} approximately but, for the purpose of devising a convergence guarantee adjustment procedure, we temporarily assume that the solution is exact and denote it by $\lambda(A)$.
The problem of maximizing $A$ while maintaining the ESP in \eqref{label_018} becomes that of finding the root of the normalized gap function, given by
\begin{align}
\phi_k(A) &\overset{\operatorname{def}}{=} \frac{1}{A} \Gamma_k(A) = \omega_k^*(A) - f(y_k) \label{label_088} \\
&= - \frac{P(A)}{2} \lambda^T(A) Q_k \lambda(A) + \langle S_k(A), \lambda(A) \rangle + R_k(A) - f(y_k). \label{label_089}
\end{align}
As with our previous gradient methods with memory, we turn to Newton's rootfinding method, with steps given by $A^{(t + 1)} = A^{(t)} - \frac{\phi_k(A^{(t)})}{\phi_k'(A^{(t)})}$. Under our exactness assumption on $\lambda(A)$, we can again use Danskin's lemma in \eqref{label_089} to yield the gap function derivative as
\begin{equation} \label{label_090}
\phi_k'(A) = - \frac{P'(A)}{2} \lambda(A)^T Q_k \lambda(A) + \langle S_k'(A), \lambda(A) \rangle + R_k'(A),
\end{equation}
where the subexpressions $P'(A)$, $S_k'(A)$ and $R_k'(A)$, respectively, take the form
\begin{equation}
P'(A) = \left( \frac{(A - A_1)^2}{A \gamma(A)} \right)' = \frac{(A - A_1) ( A \gamma_1 + A_1 \gamma(A))}{A^2 \gamma^2(A)},
\end{equation}
\begin{equation}
S_k'(A) = \frac{A_1}{A^2}(H_k + G_k^T v_1) - \frac{1}{\gamma^2(A)} G_k^T B^{-1} \left( \gamma_1 \hat{g}_k - \left(\frac{A_1 \gamma(2 A)}{A^2} - 2 \mu r \right) A_1 \hat{g}_1 \right),
\end{equation}
\begin{equation}
\begin{gathered}
R_k'(A) = -\frac{A_1}{A^2} \left( f(y_1) - \hat{h}_1 - \langle \hat{g}_1, v_1 \rangle \right) \\ - \frac{1}{2 \gamma^2(A)} \left( \gamma(0) \| \hat{g}_k \|_*^2 - \frac{A_1^2 \gamma(2 A)}{A^2} \| \hat{g}_1 \|_*^2 + 4 \mu r A_1 \langle \hat{g}_1, B^{-1} \hat{g}_k \rangle \right) .
\end{gathered}
\end{equation}
The above derivative expressions were obtained using the equalities $\left( A \gamma(A) \right)' = \gamma(2 A)$ and $\left(\frac{A}{\gamma(A)} \right)' = \frac{\gamma(0)}{\gamma^2(A)}$.

In practice, however, no algorithm can ensure that $\lambda(A)$ is an exact solution. We retain the Newton update but terminate if either $\phi_k(A) < 0$ or $\phi_k'(A) \geq 0$. This way the ESP is always satisfied and the convergence guarantees can only increase via the Netwon update. The procedure for dynamically adjusting the convergence guarantee is listed in Algorithm~\ref{label_091}. This constitutes a middle scheme called by our main algorithm, the outer scheme. In turn, it calls the auxiliary problem solver which constitutes the inner scheme. The explicit parameters are the memory model, completely described by $H_k$ and $G_k$, the efficiently updated matrix $Q_k$, the current partial bound at the optimum defined by $\hat{h}_k$ and $\hat{g}_k$ as well as the current convergence guarantee $A^{(0)}$ and the initial $\lambda^{(0)}$ used the auxiliary problem solver. Implicit parameters that do not change while the outer scheme is running describe the initial state, namely $A_1$, $\gamma_1$, $v_1$, $\hat{h}_1$ and $\hat{g}_1$. The Newton scheme outputs the final values of $A_k$ and $\lambda_k$ that will be used to update the model in the next outer iteration.

\begin{algorithm}[h!]
\caption{Newton$(H_k, G_k, Q_k, \hat{h}_k, \hat{g}_k, \lambda^{(0)}, A^{(0)})$}
\footnotesize
\label{label_091}
\begin{algorithmic}
\STATE $\lambda_{\operatorname{valid}} := \lambda^{(0)}$
\STATE $A_{\operatorname{valid}} := A := A^{(0)}$
\FOR{$t = 1, \ldots{}, N$}
\STATE Compute $P(A)$, $S_{k}(A)$, $R_{k}(A)$ according to \eqref{label_085}, \eqref{label_086} and \eqref{label_087}
\STATE Compute $\lambda$ as an approximate solution to \eqref{label_084} with starting point $\lambda^{(0)}$
\STATE Compute $\phi_k(A)$ and $\phi_k'(A)$ according to \eqref{label_088} and \eqref{label_090}
\IF{$\phi_k(A) < 0$}
\STATE Break from loop
\ENDIF
\STATE $\lambda_{\operatorname{valid}} := \lambda$
\STATE $A_{\operatorname{valid}} := A$
\IF{$\phi_k'(A) \geq 0$}
\STATE Break from loop
\ENDIF
\STATE $A := A - \frac{\phi_k(A)}{\phi_k'(A)}$
\ENDFOR
\RETURN $\lambda_{\operatorname{valid}}, A_{\operatorname{valid}}$
\end{algorithmic}
\end{algorithm}

The first iteration of the outer scheme, corresponding to $k = 0$, is a gradient step, necessary to define the starting point $x_1$ when $A_1 > 0$, as previously argued. The estimate sequence optimum $v_1$ can be chosen to be $x_1$ or any other point, possibly specified by the user alongside $x_0$. Setting $A = A_1$ gives a normalized estimate function (NEF) optimum of $\omega^*_1(A_1) = f(y_1)$ leading to a normalized gap $\phi_1(A_1) = 0$. This choice of $A = A_1$ and $\lambda = 1$ is optimal and there is no need to call the middle an inner schemes, which also cannot be employed due to the absence of a model.

Next, when $k = 1$, we are ready to initialize our model. We choose the first entry to be $H_2 = \bar{h}_2$ and $G_2 = \bar{g}_2$, with $\bar{h}_2$ and $\bar{g}_2$ given by \eqref{label_072} and \eqref{label_073}, respectively. Likewise, we obtain $\hat{h}_2$ and $\hat{g}_2$ according to \eqref{label_078}. We call the middle (Newton) scheme in Algorithm~\ref{label_091} with parameters $A_2^{(0)} = A_1 + a_2$ and $\lambda_2^{(0)} = 1$. At this point, our NEF optimum is given by $\omega^*_2\left(A_2^{(0)}\right) = -\frac{1}{2} P\left(A_2^{(0)}\right) Q_2 + S_2\left(A_2^{(0)}\right) + R_2\left(A_2^{(0)}\right)$ with $P$, $Q_2$, $S_2$ and $R_2$, respectively given by \eqref{label_085}, \eqref{label_086} and \eqref{label_087}. At this initial stage, the second iteration has the same state as the second iteration of the generalized OGM in Algorithm~\ref{label_047} and via Theorem~\ref{label_033} we have that $A_2^{(0)} \phi_2\left(A_2^{(0)}\right) \geq A_1 \phi(A_1) = 0$. The Newton middle scheme thus starts with a valid estimate sequence property that it preserves up to termination.

From the third iteration onward, namely $k \geq 2$, we always have stored in memory the previous NEF $\omega_k$ defined by the bounds $\hat{h}_k$ and $\hat{g}_k$ along with the compacted version of the model, defined by $\tilde{h}_k \overset{\operatorname{def}}{=} \langle H_{k}, \lambda_{k} \rangle$ and $\tilde{g}_k \overset{\operatorname{def}}{=} G_k \lambda_k$. This function has the expression
\begin{equation}
\begin{gathered}
\omega_k(x, g) = \frac{A_k - A_1}{A_k} \left( \tilde{h}_k + \langle \tilde{g}_k, x \rangle + \frac{1}{2 L_f} \| g \|_*^2 \right) \\ + \hat{h}_k + \langle \hat{g}_k, x \rangle + \frac{A_1}{A_k} \left( f(y_1) - \hat{h}_1 - \langle \hat{g}_1, x \rangle \right) + \frac{\gamma(A_k)}{2 A_k} \| x - v_1 \|^2 .
\end{gathered}
\end{equation}
Now, when constructing the new NEF $\omega_{k + 1}$ according to \eqref{label_079}, we can ensure that the first two model entries have the following structure:
\begin{equation} \label{label_092}
H_{k + 1}^{(1)} = \tilde{h}_k, \quad H_{k + 1}^{(2)} = \bar{h}_{k + 1}, \quad G_{k + 1}^{(1)} = \tilde{g}_k, \quad G_{k + 1}^{(2)} = \bar{g}_{k + 1}.
\end{equation}
We call the Newton method with $A_{k + 1}^{(0)} = A_k + a_{k + 1}$ and with the inner scheme starting point $\lambda^{(0)}_{k + 1} = \frac{1}{A_k + a_{k + 1} - A_1}(A_k - A_1, a_{k + 1}, 0, ..., 0)^T$. We thus have
\begin{equation}
\begin{gathered}
A_{k + 1}^{(0)} \omega_{k + 1} \left(x, g, A_{k + 1}^{(0)}, \lambda^{(0)}_{k + 1} \right) - A_{k + 1}^{(0)} \hat{w}_{k + 1}(x) \\= A_k \omega_{k} \left(x, g, A_{k}, \lambda_{k + 1} \right) - A_k \hat{w}_k(x) + a_{k + 1} w_{k + 1}(x, g), \quad x \in \mathbb{E}, \quad g \in \mathbb{E}^*.
\end{gathered}
\end{equation}
We have just shown that \eqref{label_023} and its expansion in \eqref{label_024} hold. Applying again Theorem~\ref{label_033} we have that $\left( A^{(0)}_{k + 1} \right) \phi_{k + 1}\left(A^{(0)}_{k + 1} \right) \geq A_k \phi(A_k) \geq 0$. At the start of the middle method the gap is non-negative and this property is preserved thereby ensuring the convergence of our method with the same worst-case guarantees as the generalized OGM in Algorithm~\ref{label_047}. Our Optimized Gradient Method with Memory is listed in Algorithm~\ref{label_047m}.

\begin{algorithm}[h!]
\caption{An Optimized Gradient Method with Memory}
\footnotesize
\label{label_047m}
\begin{algorithmic}[1]
\STATE \textbf{Input:} $B \succ 0$, $x_0 \in \mathbb{E}$, $\mu \geq 0$, $L_f > 0$, $A_1 \geq 0$, $\gamma_1 > 0$, $T \inn{\infty}$
\STATE $y_1 = x_0$
\STATE $f_{1} = f(y_{1})$, $g_{1} = f'(y_{1})$\\[1mm]
\STATE $x_1 = y_1 - \frac{1}{L_f} B^{-1} g_1$
\STATE Choose $v_1$ from among $x_1$ (recommended), $x_0$, or an additional input point
\STATE $\hat{h}_1 = \frac{\mu r}{2}\left( \|x_1\|^2 - \|v_1\|^2 \right) + \frac{1}{2 L_f} \| g_{1} \|_*^2$
\STATE $\hat{g}_1 = \mu r B ( v_1 - x_1 )$
\FOR{$k = 1,\ldots{},T$}
\STATE $a_{k + 1} = \frac{1}{L_f - \mu} \left( \gamma_k + \mu A_k + \sqrt{\gamma_k (\gamma_k + 2 L_f A_k)} \right)$ \\[1mm]
\STATE $\gamma_{k + 1} := \gamma_k + 2 \mu r a_{k + 1}$
\STATE $\bar{a}_{k + 1} = (1 + 2 q r ) a_{k + 1} + q r A_k$
\STATE $\bar{\gamma}_{k + 1} = \gamma_{k + 1} - \mu \bar{a}_{k + 1}$
\STATE $y_{k + 1} = (r A_k \bar{\gamma}_{k + 1} + \bar{a}_{k + 1} \gamma_k)^{-1} \left( r A_k \bar{\gamma}_{k + 1} x_k + \bar{a}_{k + 1} \gamma_k v_k \right)$\\[1mm]
\STATE $f_{k + 1} = f(y_{k + 1})$, $g_{k + 1} := f'(y_{k + 1})$
\STATE $x_{k + 1} = y_{k + 1} - \frac{1}{L_f} B^{-1} g_{k + 1}$
\STATE $\hat{h}_{k + 1} = \frac{\mu r}{2}\left( \|x_{k + 1}\|^2 - \|v_1\|^2 \right) + \frac{1}{2 L_f} \| g_{k + 1} \|_*^2$
\STATE $\bar{h}_{k + 1} = \hat{h}_{k + 1} + f_{k + 1} - \langle g_{k + 1}, y_{k + 1} \rangle$
\STATE $\hat{g}_{k + 1} = \mu r B (v_1 - x_{k + 1} )$
\STATE $\bar{g}_{k + 1} = \hat{g}_{k + 1} + g_{k + 1} $
\IF {$k = 1$}
\STATE $H_2 = \bar{h}_{2}$, $G_2 = \bar{g}_2$, $Q_2 = \| \bar{g}_2 \|_*^2$, $\lambda^{(0)}_{2} = 1$
\ELSE
\STATE Generate $H_{k + 1}$ and $G_{k + 1}$ to satisfy \eqref{label_092}
\STATE Generate $Q_{k + 1}$ to equal $G_{k + 1}^T B^{-1} G_{k + 1}$
\STATE $\lambda^{(0)}_{k + 1} = \frac{1}{A_k + a_{k + 1} - A_1}(A_k - A_1, a_{k + 1}, 0, ..., 0)^T$
\ENDIF
\STATE $\lambda_{k + 1}, A_{k + 1} = \operatorname{Newton}(H_{k + 1}, G_{k + 1}, Q_{k + 1}, \hat{h}_{k + 1}, \hat{g}_{k + 1}, \lambda^{(0)}_{k + 1}, A_k + a_{k + 1})$

\STATE $\gamma_{k + 1} := \gamma_1 + 2 \mu r (A_{k + 1} - A_1)$
\STATE $\tilde{h}_{k + 1} = \langle H_{k + 1}, \lambda_{k + 1} \rangle$
\STATE $\tilde{g}_{k + 1} = G_{k + 1} \lambda_{k + 1}$
\STATE $\rho_{k + 1} = \left(1 - \frac{A_1}{A_{k + 1}} \right) \tilde{g}_{k + 1} + \hat{g}_{k + 1} - \frac{A_1}{A_{k + 1}} \hat{g}_1$
\STATE $v_{k + 1} = v_1 - \frac{A_{k + 1}}{\gamma_{k + 1}} B^{-1} \rho_{k + 1}$
\ENDFOR
\end{algorithmic}
\end{algorithm}

\section{Improving the guarantees in iterate space on composite problems}

In this second part we aim to apply the lessons learned in the smooth unconstrained case to obtain better guarantees on the broader class of composite problems. The results have a similar but not identical structure to those in the previous part and consequently we will \emph{redefine} many quantities to adapt them to their new roles.

\subsection{Composite minimization}

We consider the optimization problem
\begin{equation} \label{label_094}
\min_{x \in \mathbb{E}} F(x) = f(x) + \Psi(x),
\end{equation}
where $f$ is an $L_f$-smooth function with strong convexity parameter $\mu_f$. The regularizer $\Psi$ is an extended value convex lower-semicontinuous with strong convexity parameter $\mu_{\Psi}$ yielding an objective strong convexity parameter $\mu = \mu_f + \mu_{\Psi}$ for the entire objective. We assume that $\mu_f$ and $\mu_{\Psi}$ or at least lower estimates of each are known to the algorithm whereas $L_f$ may be unknown in which case the algorithm starts with a guess $L_0 > 0$ that can be arbitrary. The regularizer is proximable, meaning that the exact computation of the function
\begin{equation} \label{label_095}
\operatorname{prox}_{\tau \Psi}(x) \overset{\operatorname{def}}{=} \arg\min_{z \in \mathbb{E}}\left\{\tau \Psi(z) + \frac{1}{2} \| z - x \|^2 \right\},
\end{equation}
is tractable for all $x \in \mathbb{E}$ and $\tau > 0$. The regularizer is infinite outside the feasible set $Q$ being thus able to embed constraints.

\subsection{The bounds and estimate functions}
The oracle is called at the points $y_k$, $k \geq 1$. Under the composite setup we have $y_1 = x_0$, the initial point, and the points $x_k$, $k \geq 1$, are now given by
\begin{equation} \label{label_096}
x_k = T_{L_k}(y_k) = \operatorname{prox}_{\frac{1}{L_k} \Psi} \left(y_k - \frac{1}{L_k} B^{-1} f'(y_k)\right),
\end{equation}
where the proximal gradient operator $T_L$ is now defined for all $z \in \mathbb{E}$ and $L > 0$ as
\begin{equation}
T_L(z) \overset{\operatorname{def}}{=} \arg\min_{x \in \mathbb{E}} \left\{ f(z) + \langle f'(z), x - z \rangle + \frac{L}{2} \| x - z \|^2 + \Psi(x) \right\}.
\end{equation}
The parameter $L_k$ in \eqref{label_096} is an estimate of $L_f$ outputted by a line-search procedure equipped with a stopping criterion that ensures the descent rule
\begin{equation} \label{label_097}
f(x_k) \leq f(y_k) + \langle f'(y_k), x_k - y_k \rangle + \frac{L_k}{2} \| x_k - y_k \|^2, \quad k \geq 1.
\end{equation}
This setup can handle situations in which $L_f$ may not be known to the algorithm. Let $\bar{L}_k \overset{\operatorname{def}}{=} L_k + \mu_{\Psi}$ for all $k \geq 1$.

\begin{proposition} \label{label_098}
For any $L_k$ satisfying the descent rule in \eqref{label_097}, if $x_k$ is obtained from \eqref{label_096}, then we can construct a global lower bound on the objective of the form
\begin{equation}
F(x) \geq F(x_k) + \frac{1}{2 \bar{L}_k} \| g_k \|_*^2 + \langle g_k, x - y_k \rangle + \frac{\mu}{2} \| x - y_k \|^2, \quad x \in \mathbb{E}, \quad k \geq 1,
\end{equation}
where we redefine $g_k$ as the composite gradient mapping \cite{ref_015}, now given by
\begin{equation} \label{label_099}
g_k \overset{\operatorname{def}}{=} \bar{L}_k B (y_k - x_k), \quad k \geq 1.
\end{equation}
\end{proposition}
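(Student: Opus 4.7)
The plan is to derive the bound by combining three ingredients: the first-order optimality condition of the proximal step, the strong convexity of $f$ and $\Psi$, and the descent rule \eqref{label_097}. The identity $g_k = \bar{L}_k B(y_k - x_k)$ will produce a "strong subgradient" of $F$ at $x_k$ via a carefully chosen element of $\partial\Psi(x_k)$.

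First I would write the optimality condition for the proximal step in \eqref{label_096}, namely $0 \in f'(y_k) + L_k B(x_k - y_k) + \partial\Psi(x_k)$, and extract the specific subgradient $\xi_k \overset{\operatorname{def}}{=} L_k B(y_k - x_k) - f'(y_k) \in \partial\Psi(x_k)$. Rewriting using the definition \eqref{label_099} gives the key identity $f'(y_k) + \xi_k = g_k - \mu_\Psi B(y_k - x_k)$, which is the effective (sub)gradient of $F$ at $x_k$ adjusted for the fact that $\mu_\Psi$ of the strong convexity is absorbed into $\bar{L}_k$.

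Next I would assemble three inequalities valid for arbitrary $x \in \mathbb{E}$: (i) the $\mu_f$-strong convexity lower bound for $f$ expanded around $y_k$, (ii) the $\mu_\Psi$-strong convexity lower bound for $\Psi$ expanded around $x_k$ using the chosen subgradient $\xi_k$, and (iii) the descent rule \eqref{label_097} used in the reverse direction to upper-bound $-f(x_k)$ in terms of $f(y_k)$ and $\tfrac{L_k}{2}\|x_k - y_k\|^2$. Summing these and noting that $\langle g_k, y_k - x_k\rangle = \bar{L}_k\|y_k - x_k\|^2 = \tfrac{1}{\bar{L}_k}\|g_k\|_*^2$ collects exactly the gradient-mapping terms $\tfrac{1}{2\bar{L}_k}\|g_k\|_*^2 + \langle g_k, x - y_k\rangle$ on the right-hand side, with the $L_k$-pieces cancelling by the definition of $\bar{L}_k$.

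The main obstacle, which is purely a bookkeeping step rather than a conceptual one, will be showing that all remaining $\mu_\Psi$-weighted quadratic and cross terms combine into the desired $\tfrac{\mu_f + \mu_\Psi}{2}\|x - y_k\|^2 = \tfrac{\mu}{2}\|x - y_k\|^2$. This amounts to expanding
\[
\|x - y_k\|^2 = \|x - x_k\|^2 + 2\langle B(x_k - y_k), x - x_k\rangle + \|x_k - y_k\|^2
\]
and verifying that the residual cross term $-\mu_\Psi\langle B(y_k-x_k), x - x_k\rangle$ arising from the subgradient identity is cancelled exactly by the expansion above. Once this algebraic cancellation is in place, the inequality is established for every $x \in \mathbb{E}$, yielding the claim.
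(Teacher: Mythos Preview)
Your proposal is correct and follows essentially the same approach as the paper's proof: both extract the subgradient $\xi_k$ from the optimality condition of the proximal step, combine the $\mu_f$-strong convexity of $f$ around $y_k$, the $\mu_\Psi$-strong convexity of $\Psi$ around $x_k$, and the descent rule \eqref{label_097}, and then carry out the algebraic regrouping (using $\bar{L}_k = L_k + \mu_\Psi$ and the expansion of $\|x - x_k\|^2$ around $y_k$) to arrive at the stated bound. The only cosmetic difference is that you isolate the identity $f'(y_k) + \xi_k = g_k - \mu_\Psi B(y_k - x_k)$ up front, whereas the paper substitutes the optimality condition later; the computations are otherwise identical.
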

\begin{proof}
The first-order optimality conditions of \eqref{label_096} imply that for every $k \geq 1$ there exists a subgradient $\xi_{k}$ of $\Psi$ at $x_{k}$ (we use the notation $\xi_{k} \in \partial \Psi(x_{k})$) such that
\begin{equation} \label{label_100}
f'(y_k) + L_k B (x_k - y_k) + \xi_k = 0.
\end{equation}
Hereafter, we consider all $k \geq 1$ and $x \in \mathbb{E}$. The existence of strong convexity parameters $\mu_f \geq 0$ and $\mu_{\Psi} \geq 0$, respectively imply that
\begin{align}
f(x) &\geq f(y_k) + \langle f'(y_k), x - y_k \rangle + \frac{\mu_f }{2} \| x - y_k \|^2, \label{label_101}\\
\Psi(x) &\geq \Psi(x_k) + \langle \xi_k, x - x_k \rangle + \frac{\mu_\Psi}{2} \| x - x_k \|^2. \label{label_102}
\end{align}
Adding together \eqref{label_097}, \eqref{label_101} and \eqref{label_102} we obtain
\begin{equation}
\begin{gathered}
f(y_k) + \langle f'(y_k), x_k - y_k \rangle + \frac{L_k}{2} \| x_k - y_k \|^2 + f(x) + \Psi(x) \geq f(x_k) \\+ f(y_k) + \langle f'(y_k), x - y_k \rangle + \frac{\mu_f}{2} \| x - y_k \|^2 + \Psi(x_k) + \langle \xi_k, x - x_k \rangle + \frac{\mu_\Psi}{2} \| x - x_k \|^2. \label{label_103}
\end{gathered}
\end{equation}
Grouping together terms in \eqref{label_103} using \eqref{label_094} gives
\begin{equation} \label{label_104}
\begin{gathered}
F(x) \geq F(x_k) + \langle f'(y_k) + \xi_k, x - x_k \rangle \\- \frac{L_k}{2} \| x_k - y_k \|^2 + \frac{\mu_f}{2} \| x - y_k \|^2 + \frac{\mu_\Psi}{2} \| x - x_k \|^2.
\end{gathered}
\end{equation}
Applying \eqref{label_100} to \eqref{label_104} and re-basing everything around $y_k$ yields
\begin{equation} \label{label_105}
\begin{gathered}
F(x) \geq F(x_k) + L_k \langle B(y_k - x_k), x - y_k + y_k - x_k \rangle \\- \frac{L_k}{2} \| x_k - y_k \|^2 + \frac{\mu_f}{2} \| x - y_k \|^2 + \frac{\mu_{\Psi}}{2} \| (y_k - x_k) + (x - y_k)\|^2.
\end{gathered}
\end{equation}
Expanding the last square term and regrouping terms in \eqref{label_105} leads to
\begin{equation} \label{label_106}
\begin{gathered}
F(x) \geq F(x_k) + L_k \langle B(y_k - x_k), x - y_k \rangle + \frac{L_k + \mu_{\Psi}}{2} \| x_k - y_k \|^2 \\+ \frac{\mu}{2} \| x - y_k \|^2 + \mu_{\Psi} \langle B(y_k - x_k), x - y_k \rangle.
\end{gathered}
\end{equation}
Further regrouping terms in \eqref{label_106} using $\bar{L}_k = L_k + \mu_{\Psi}$ completes the proof. See also \cite{ref_010} for problems where $\| .\|$ denotes the standard Euclidean norm.
\end{proof}
Unlike in the smooth unconstrained case, the global lower bounds in Proposition~\ref{label_098} do not contain gradient-like quantities centered around the variable $x$ (this would correspond to variable $z_1$ in \eqref{label_007}), eliminating the need for a redundant first iteration. We can thus revert to the original indexing found in ACGM~\cite{ref_003} and instead rely on input parameters $A_0 \geq 0$ and $\gamma_0 > 0$. Adhering to this new indexing strategy, we define the bounds based on Proposition~\ref{label_098} and their aggregate for all $k \geq 1$ as
\begin{gather}
w_k(x) \overset{\operatorname{def}}{=} F(x_k) + \frac{1}{2 \bar{L}_k} \| g_k \|_*^2 + \langle g_k, x - y_k \rangle + \frac{\mu}{2} \| x - y_k \|^2, \label{label_107} \\
(A_k - A_0) W_k(x, g) \overset{\operatorname{def}}{=} \displaystyle \sum_{i = 1}^{k} a_i w_i(x), \label{label_108}
\end{gather}
with the convergence guarantees satisfying the positive difference property
\begin{equation} \label{label_109}
a_{k + 1} = A_{k + 1} - A_k, \quad k \geq 0.
\end{equation}
To construct bounds around the optimal point $x^*$, we introduce the bounds
\begin{equation}
\hat{w}_k(x) = \frac{\mu \alpha_k}{2} \| x - x_k \|^2, \quad k \geq 0.
\end{equation}
The existence of the strong convexity parameter $\mu$ implies the quadratic functional growth property~\cite{ref_013}. We thus have
\begin{equation} \label{label_107_star}
F(x_k) \geq F^* + \hat{w}_k(x^*) , \quad k \geq 0,\quad 0 \leq \alpha_k \leq 1.
\end{equation}
We introduce the dampening parameters $\alpha_k$ because we cannot prove the convergence of a method constructed with all $\alpha_k$ set to the maximal value of $1$, as we will show in the sequel.

Having defined the bounds, we can construct the estimate functions using the same structure as in \eqref{label_014}, namely
\begin{equation} \label{label_111}
\psi_k(x) = (A_k - A_0) W_k(x) + A_k \hat{w}_k(x) + A_0 (F(x_0) - \hat{w}_0(x)) + \frac{\gamma_0}{2} \|x - x_0 \|^2,
\end{equation}
for all $k \geq 0$. The estimate functions take on the canonical form
\begin{equation} \label{label_112}
\psi_k(x) = \psi_k^* + \frac{\gamma_k}{2} \| x - v_k \|^2, \quad k \geq 0,
\end{equation}
with the first estimate function having $\psi_0^* = A_0 F(x_0)$ and $v_0 = x_0$.

The Estimate Sequence Property is now centered around $x_k$, namely
\begin{equation} \label{label_018_comp}
A_k F(x_k) \leq \psi_k^*, \quad k \geq 0.
\end{equation}
Because in the general case the objective $F$ may not be differentiable, we can only guarantee the convergence of the iterates, which follows directly from the estimate sequence property in \eqref{label_018_comp} and our assumptions
\begin{equation} \label{label_114}
\begin{gathered}
A_k F(x_k) + \frac{\gamma_k}{2} \| v_k - x^* \|^2 \overset{\eqref{label_018_comp}}{\leq} \psi_k^* + \frac{\gamma_k}{2} \| v_k - x^* \|^2 \overset{\eqref{label_112}}{=} \psi_k(x^*) \overset{\eqref{label_111}}{\leq} A_k F^* + A_k \hat{w}_k(x^*) \\ + A_0 (F(x_0) - F^* - \hat{w}_0(x^*)) + \frac{\gamma_0}{2} \| x_0 - x^* \|^2 \overset{\eqref{label_107_star}}{\leq} A_k F(x_k) + \mathcal{D}_0, \quad k \geq 0,
\end{gathered}
\end{equation}
where $\mathcal{D}_0 \overset{\operatorname{def}}{=} A_0 (F(x_0) - F^* - \hat{w}_0(x)) + \frac{\gamma_0}{2} \| x_0 - x^* \|^2 \overset{\eqref{label_107_star}}{\geq} 0$.

Again, we have two possibilities of defining convergence guarantees. First, when $\mu = 0$, the estimate functions are identical to those used in deriving the Accelerated Composite Gradient Method~\cite{ref_003,ref_004,ref_006}. This situation is well studied and we will not give it more consideration in this work.

The second means is based on the distance between the estimate sequence optima and the optimal set whereby the chain in \eqref{label_114} implies that
\begin{equation} \label{label_115}
\| v_k - x^* \|^2 \leq \frac{2 \mathcal{D}_0}{\gamma_k}, \quad k \geq 0.
\end{equation}
Thus, we can enhance the performance in iterate space of first-order methods by increasing the growth rate of the estimate function curvature sequence $(\gamma_k)_{k \geq 0}$.

\subsection{Constructing an improved method}

We can combine the accumulated weight update in \eqref{label_108}, the canonical form in \eqref{label_112} and the bound in \eqref{label_107_star} to obtain a single update that can be used to derive the algorithm, given for all $k \geq 0$ by
\begin{equation} \label{label_116}
\begin{gathered}
\psi_{k + 1}^* + \frac{\gamma_{k + 1}}{2} \| x - v_{k + 1} \|^2 - \frac{\mu \alpha_{k + 1} A_{k + 1}}{2} \| x - x_{k + 1} \|^2 \\
= \psi_k^* + \frac{\gamma_k}{2} \| x - v_k \|^2 - \frac{\mu \alpha_{k} A_k}{2} \| x - x_k \|^2 \\
+ a_{k + 1} \left( F(x_{k + 1}) + \frac{1}{2 \bar{L}_{k + 1}} \| g_{k + 1} \|_*^2 + \langle g_{k + 1} , x - y_{k + 1} \rangle + \frac{\mu}{2} \| x - y_{k + 1} \|^2 \right).
\end{gathered}
\end{equation}
Differentiating \eqref{label_116} twice yields
\begin{equation} \label{label_117}
\gamma_{k + 1} = \gamma_k + \mu (a_{k + 1} + \alpha_{k + 1} A_{k + 1} - \alpha_k A_k), \quad k \geq 0.
\end{equation}

Differentiating \eqref{label_116} once and taking $x = y_{k + 1}$ provides an update of the estimate function optima for all $k \geq 0$ as
\begin{equation} \label{label_118}
\begin{gathered}
\gamma_{k + 1} (v_{k + 1} - y_{k + 1}) = \gamma_k (v_k - y_{k + 1}) - \mu \alpha_k A_k(x_k - y_{k + 1}) \\ - (a_{k + 1} + q_{k + 1} \alpha_{k + 1} A_{k + 1}) B^{-1} g_{k + 1},
\end{gathered}
\end{equation}
where the local inverse condition number is defined as $q_{k} \overset{\operatorname{def}}{=} \mu / (L_k + \mu_{\Psi})$, $k \geq 1$. To simplify the derivations we redefine $\bar{a}_{k} \overset{\operatorname{def}}{=} a_{k} + q_{k} \alpha_{k} A_{k}$, $k \geq 1$.

Let the estimate sequence gap in \eqref{label_018_comp} be written as $\Gamma_k \overset{\operatorname{def}}{=} \psi_k^* - A_k F(x_k)$, $k \geq 0$. Setting $x = y_{k + 1}$ in \eqref{label_116} and rearranging terms using \eqref{label_099} gives
\begin{equation} \label{label_119}
\begin{gathered}
\Gamma_{k + 1} - \Gamma_k = \frac{\gamma_k}{2} \| v_k - y_{k + 1} \|^2 - \frac{\gamma_{k + 1}}{2} \| v_{k + 1} - y_{k + 1} \|^2 - \frac{\mu \alpha_k A_k}{2} \| x_k - y_{k + 1} \|^2 \\ + \frac{\bar{a}_{k + 1}}{2 \bar{L}_{k + 1}} \| g_{k + 1} \|_*^2 + A_k(F(x_k) - F(x_{k + 1})), \quad k \geq 0.
\end{gathered}
\end{equation}
Applying Proposition~\ref{label_098} with $x = x_k$ to \eqref{label_119} yields for all $k \geq 0$
\begin{equation} \label{label_120}
\begin{gathered}
\Gamma_{k + 1} - \Gamma_k \geq \frac{\gamma_k}{2} \| v_k - y_{k + 1} \|^2 - \frac{\gamma_{k + 1}}{2} \| v_{k + 1} - y_{k + 1} \|^2 \\+ \frac{\mu (1 - \alpha_k) A_k}{2} \| x_k - y_{k + 1} \|^2 + \frac{(1 + q_{k + 1} \alpha_{k + 1}) A_{k + 1}}{2 \bar{L}_{k + 1}} \| g_{k + 1} \|_*^2 + A_k \langle g_{k + 1}, x_k - y_{k + 1} \rangle.
\end{gathered}
\end{equation}
\begin{theorem}
Any first-order method that updates the estimate functions according to \eqref{label_116} satisfies at every iteration
\begin{equation}
\begin{gathered}
\Gamma_{k + 1} - \Gamma_k \geq \frac{\gamma_k}{2 \bar{\gamma}^2_{k + 1}} (\bar{\gamma}_{k + 1}^2 - \gamma_k \gamma_{k + 1}) \| v_k - y_{k + 1} \|^2 \\
+ \left( \frac{(1 + q_{k + 1} \alpha_{k + 1}) A_{k + 1}}{2 \bar{L}_{k + 1}} - \frac{\bar{a}^2_{k + 1}}{2 \gamma_{k + 1}} \right) \| g_{k + 1} \|_*^2 + \frac{\mu (1 - \alpha_k) A_k}{2} \| x_k - y_{k + 1} \|^2 \\
+ \left\langle \frac{\bar{\gamma}_{k + 1}}{\gamma_{k + 1}} g_{k + 1} + \mu \alpha_{k} B \left( \frac{\gamma_k}{\bar{\gamma}_{k + 1}} (v_k - y_{k + 1}) - \frac{\mu \alpha_k}{2 \gamma_{k + 1}} Y_{k + 1} \right), Y_{k + 1} \right\rangle , \quad k \geq 0,
\end{gathered}
\end{equation}
where
\begin{align}
\bar{\gamma}_{k + 1} & \overset{\operatorname{def}}{=} \gamma_{k + 1} - \mu \alpha_k \bar{a}_{k + 1}, \label{label_121}\\
Y_{k + 1} & \overset{\operatorname{def}}{=} A_k (x_k - y_{k + 1}) + \frac{\bar{a}_{k + 1} \gamma_k}{\bar{\gamma}_{k + 1}}(v_k - y_{k + 1}). \label{label_122}
\end{align}
\end{theorem}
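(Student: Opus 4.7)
The plan is to mirror the proof of Theorem~\ref{label_033} almost verbatim, since the composite statement has the same structural form. The two ingredients available are the descent-type inequality \eqref{label_120}, which already plays the role of \eqref{label_032}, and the estimate-function optima update \eqref{label_118}, which replaces \eqref{label_026}. The only real differences are the appearance of the dampening parameter $\alpha_k$ in the cross term involving $x_k - y_{k+1}$, the local condition number $q_{k+1}$ in the $\bar{a}_{k+1}$ coefficient, and a redefined $\bar{\gamma}_{k+1}$; the algebraic skeleton is identical.

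First I would solve \eqref{label_122} for the quantity $-\mu\alpha_k A_k(x_k - y_{k+1})$, obtaining
\begin{equation*}
-\mu\alpha_k A_k(x_k - y_{k+1}) = \frac{\mu\alpha_k \bar{a}_{k+1}\gamma_k}{\bar{\gamma}_{k+1}}(v_k - y_{k+1}) - \mu\alpha_k Y_{k+1},
\end{equation*}
and substitute this into the right-hand side of \eqref{label_118}. Invoking the definition \eqref{label_121} of $\bar{\gamma}_{k+1}$ to collapse the $v_k - y_{k+1}$ contribution, this gives
\begin{equation*}
\gamma_{k+1}(v_{k+1} - y_{k+1}) = \frac{\gamma_k\gamma_{k+1}}{\bar{\gamma}_{k+1}}(v_k - y_{k+1}) - \mu\alpha_k Y_{k+1} - \bar{a}_{k+1}B^{-1}g_{k+1},
\end{equation*}
which is the direct composite analogue of \eqref{label_037}.

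Second, I would compute $\frac{\gamma_k}{2}\|v_k - y_{k+1}\|^2 - \frac{\gamma_{k+1}}{2}\|v_{k+1} - y_{k+1}\|^2$ by writing the second term as $-\tfrac{1}{2\gamma_{k+1}}\|\gamma_{k+1}(v_{k+1} - y_{k+1})\|^2$ and expanding the three-term square. This produces exactly the same decomposition as \eqref{label_038}, but with every occurrence of $\mu$ from the cross terms replaced by $\mu\alpha_k$: a coefficient $\frac{\gamma_k}{2\bar{\gamma}_{k+1}^2}(\bar{\gamma}_{k+1}^2 - \gamma_k\gamma_{k+1})\|v_k - y_{k+1}\|^2$, a term $-\frac{\mu^2\alpha_k^2}{2\gamma_{k+1}}\|Y_{k+1}\|^2$, a term $-\frac{\bar{a}_{k+1}^2}{2\gamma_{k+1}}\|g_{k+1}\|_*^2$, the mixed term $\frac{\bar{a}_{k+1}\gamma_k}{\bar{\gamma}_{k+1}}\langle g_{k+1}, v_k - y_{k+1}\rangle$, and a paired inner product $\langle \tfrac{\mu\alpha_k\gamma_k}{\bar{\gamma}_{k+1}}B(v_k - y_{k+1}) - \tfrac{\mu\alpha_k\bar{a}_{k+1}}{\gamma_{k+1}}g_{k+1}, Y_{k+1}\rangle$.

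Finally I would plug this decomposition into \eqref{label_120} and regroup. The $\frac{\mu(1-\alpha_k)A_k}{2}\|x_k - y_{k+1}\|^2$ term passes through untouched. To produce the advertised bracket, I would absorb the mixed term $\frac{\bar{a}_{k+1}\gamma_k}{\bar{\gamma}_{k+1}}\langle g_{k+1}, v_k - y_{k+1}\rangle$ together with $A_k\langle g_{k+1}, x_k - y_{k+1}\rangle$ into a single pairing against $Y_{k+1}$ by using the identity $\bar{a}_{k+1} v + A_k(x_k - y_{k+1}) = Y_{k+1}$ implicit in \eqref{label_122}, producing the coefficient $\bar{\gamma}_{k+1}/\gamma_{k+1}$ in front of $g_{k+1}$ after combining with the $-\mu\alpha_k\bar{a}_{k+1}/\gamma_{k+1}$ factor via \eqref{label_121}. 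The $\|g_{k+1}\|_*^2$ contributions from \eqref{label_120} and from the square expansion combine into the single coefficient $\frac{(1+q_{k+1}\alpha_{k+1})A_{k+1}}{2\bar{L}_{k+1}} - \frac{\bar{a}_{k+1}^2}{2\gamma_{k+1}}$. The main obstacle I foresee is bookkeeping: verifying that the cross-term factor becomes exactly $\bar{\gamma}_{k+1}/\gamma_{k+1}$ and that the $\|v_k - y_{k+1}\|^2$ coefficient is left in the unsimplified form $\frac{\gamma_k}{2\bar{\gamma}_{k+1}^2}(\bar{\gamma}_{k+1}^2 - \gamma_k\gamma_{k+1})$ rather than further reduced (unlike the smooth case, we do not impose the equality counterpart of \eqref{label_041} here, so no analogue of \eqref{label_040} is invoked).
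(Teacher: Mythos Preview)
Your proposal is correct and follows essentially the same route as the paper's own proof: derive the composite analogue of \eqref{label_037} from \eqref{label_118} and \eqref{label_122}, expand the quadratic difference $\tfrac{\gamma_k}{2}\|v_k - y_{k+1}\|^2 - \tfrac{\gamma_{k+1}}{2}\|v_{k+1} - y_{k+1}\|^2$ exactly as in \eqref{label_038}, substitute into \eqref{label_120}, regroup the two $\langle g_{k+1},\cdot\rangle$ terms against $Y_{k+1}$ via \eqref{label_122}, and finish with \eqref{label_121}. Your remark that no analogue of \eqref{label_040} is invoked here, so the $\|v_k - y_{k+1}\|^2$ coefficient stays in the form $\tfrac{\gamma_k}{2\bar{\gamma}_{k+1}^2}(\bar{\gamma}_{k+1}^2 - \gamma_k\gamma_{k+1})$, is exactly right and matches the paper.
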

\begin{proof}
The results herein hold for all $k \geq 0$. The proof structure follows closely the one of Theorem~\ref{label_033}. First, using \eqref{label_118} and \eqref{label_122} we obtain
\begin{equation} \label{label_123}
\gamma_{k + 1} (v_{k + 1} - y_{k + 1}) = \frac{\gamma_k \gamma_{k + 1}}{\bar{\gamma}_{k + 1}}(v_k - y_{k + 1}) - \mu \alpha_k Y_{k + 1} - \bar{a}_{k + 1} B^{-1} g_{k + 1}.
\end{equation}
Consequently
\begin{equation} \label{label_124}
\begin{gathered}
\frac{\gamma_{k}}{2} \| v_{k} - y_{k + 1} \|^2 - \frac{\gamma_{k + 1}}{2} \| v_{k + 1} - y_{k + 1} \|^2 \\= \frac{\gamma_{k}}{2} \| v_{k} - y_{k + 1} \|^2 - \frac{1}{2 \gamma_{k + 1}} \left\| \frac{\gamma_k \gamma_{k + 1}}{\bar{\gamma}_{k + 1}}(v_k - y_{k + 1}) - \mu \alpha_k Y_{k + 1} - \bar{a}_{k + 1} B^{-1} g_{k + 1} \right\|^2 \\
= \frac{\gamma_k}{2 \bar{\gamma}_{k + 1}^2} \left( \bar{\gamma}_{k + 1}^2 - \gamma_k \gamma_{k + 1} \right) \| v_k - y_{k + 1} \|^2 - \frac{\mu^2 \alpha_k^2}{2 \gamma_{k + 1}} \| Y_{k + 1} \|^2 - \frac{\bar{a}_{k + 1}^2}{2 \gamma_{k + 1}} \| g_{k + 1} \|_*^2 \\+ \frac{\bar{a}_{k + 1} \gamma_k}{\bar{\gamma}_{k + 1}} \langle g_{k + 1}, v_k - y_{k + 1} \rangle + \mu \alpha_k \left\langle \frac{\gamma_k}{\bar{\gamma}_{k + 1}} B (v_k - y_{k + 1}) - \frac{\bar{a}_{k + 1}}{\gamma_{k + 1}} g_{k + 1} , Y_{k + 1} \right\rangle .
\end{gathered}
\end{equation}
Expanding \eqref{label_120} using \eqref{label_124} and further applying \eqref{label_122} gives
\begin{equation} \label{label_125}
\begin{gathered}
\Gamma_{k + 1} - \Gamma_k \geq \frac{\gamma_k}{2 \bar{\gamma}_{k + 1}^2} \left( \bar{\gamma}_{k + 1}^2 - \gamma_k \gamma_{k + 1} \right) \| v_k - y_{k + 1} \|^2
\\ + \left( \frac{(1 + q_{k + 1} \alpha_{k + 1}) A_{k + 1}}{2 \bar{L}_{k + 1}} - \frac{\bar{a}_{k + 1}^2}{2 \gamma_{k + 1}} \right)
\| g_{k + 1} \|_*^2 + \frac{\mu (1 - \alpha_k) A_k}{2} \| x_k - y_{k + 1} \|^2
\\+ \left\langle \left(1 - \frac{\mu \bar{a}_{k + 1}}{\gamma_{k + 1}} \right) g_{k + 1} + \mu B \left( \frac{\gamma_k}{\bar{\gamma}_{k + 1}} (v_k - y_{k + 1}) - \frac{\mu}{2 \gamma_{k + 1}} Y_{k + 1} \right), Y_{k + 1} \right\rangle.
\end{gathered}
\end{equation}
We conclude by applying \eqref{label_121} in \eqref{label_125}.
\end{proof}
The estimate sequence property can be maintained across iterations regardless of the algorithmic state if we can simultaneously ensure for all $k \geq 0$ three conditions:
\begin{align}
Y_{k + 1} &= 0, \label{label_126}\\
(1 + q_{k + 1} \alpha_{k + 1}) A_{k + 1} \gamma_{k + 1} &\geq \bar{L}_{k + 1} \bar{a}_{k + 1}^2, \label{label_127}\\
\bar{\gamma}_{k + 1}^2 &\geq \gamma_k \gamma_{k + 1} . \label{label_128}
\end{align}
The first condition \eqref{label_126} gives the update of the oracle point $y_{k + 1}$ as
\begin{equation} \label{label_129}
y_{k + 1} = (A_k \bar{\gamma}_{k + 1} + \bar{a}_{k + 1} \gamma_k)^{-1} (A_k \bar{\gamma}_{k + 1} x_k + \bar{a}_{k + 1} \gamma_k v_k), \quad k \geq 0,
\end{equation}
where $\bar{a}_{k + 1}$ and $\bar{\gamma}_{k + 1}$ can be expressed as functions of the quantities $A_k$ and $\gamma_k$ available at the beginning of the iteration alongside $L_{k + 1}$ (with the closely related $q_{k + 1}$) and $a_{k + 1}$ generated during the iteration for all $k \geq 0$ as
\begin{align}
\bar{a}_{k + 1} &= a_{k + 1} + q_{k + q} \alpha_{k + 1} A_{k + 1} = q_{k + 1} \alpha_{k + 1} A_k + (1 + q_{k + 1} \alpha_{k + 1}) a_{k + 1}, \label{label_130}\\
\bar{\gamma}_{k + 1} &\overset{\eqref{label_121}}{=} \gamma_k + \mu a_{k + 1} + \mu \left( \alpha_{k + 1} - \alpha_k - q_{k + 1} \alpha_k \alpha_{k + 1}\right) (A_k + a_{k + 1}).
\end{align}
From \eqref{label_123} and \eqref{label_126} we also obtain the estimate sequence optima update
\begin{equation} \label{label_131}
v_{k + 1} = \frac{\gamma_k}{\bar{\gamma}_{k + 1}} v_k + \left( 1 - \frac{\gamma_k}{\bar{\gamma}_{k + 1}} \right) y_{k + 1} - \frac{\bar{a}_{k + 1}}{\gamma_{k + 1}} B^{-1} g_{k + 1} .
\end{equation}

When $\mu = 0$, the condition \eqref{label_128} is always satisfied with equality and \eqref{label_127} becomes $L_{k + 1} a_{k + 1}^2 \leq \gamma_0 A_{k + 1}$, $k \geq 0$, which is identical to the non-strongly convex version of ACGM~\cite{ref_008,ref_009}.

When $\mu > 0$, \eqref{label_127} and \eqref{label_128} imply that $a_{k + 1}^{(2)} \overset{\eqref{label_128}}{\leq} a_{k + 1} \overset{\eqref{label_127}}{\leq} a_{k + 1}^{(1)}$, where $a_{k + 1}^{(1)}$ and $a_{k + 1}^{(2)}$ are, respectively, the only positive solutions of the following quadratic equations:
\begin{equation} \label{label_132}
(\bar{L}_{k + 1} - \mu) \left( a_{k + 1}^{(1)} \right)^2 - (\gamma_k + \mu (1 - \alpha_{k}) A_k) a_{k + 1}^{(1)} - A_k \left( \gamma_k + \frac{\mu \beta_{k + 1} A_k}{1 + q_{k + 1}} \right) = 0,
\end{equation}
\begin{equation} \label{label_133}
\begin{gathered}
\mu (1 + \beta_{k + 1})^2 \left( a_{k + 1}^{(2)} \right)^2 + \left( (1 - \alpha_{k + 1} + 2 \beta_{k + 1}) \gamma_k + 2 \mu \beta_{k + 1} (1 + \beta_{k + 1}) A_k \right) a_{k + 1}^{(2)} \\
+ A_k ( (\alpha_k - \alpha_{k + 1} + 2 \beta_{k + 1}) \gamma_k + \mu \beta_{k + 1}^2 A_k) = 0,
\end{gathered}
\end{equation}
with $\beta_{k + 1} \overset{\operatorname{def}}{=} \alpha_{k + 1} - \alpha_{k} - q_{k + 1} \alpha_{k} \alpha_{k + 1}$, $k \geq 0$.

A sufficient condition for the algorithm to maintain the estimate sequence property is thus given by
\begin{equation} \label{label_134}
a_{k + 1}^{(2)} \leq a_{k + 1}^{(1)}, \quad k \geq 0.
\end{equation}
When \eqref{label_134} holds, the most aggressive weight update would be $a_{k + 1} = a_{k + 1}^{(1)}$, $k \geq 0$.

We are now ready to formulate a method. Putting together the new iterate generation in \eqref{label_096} under the line-search condition in \eqref{label_097}, the weight update in \eqref{label_132}, accumulating the convergence guarantees as in \eqref{label_109}, the estimate function curvature update in \eqref{label_117} and the auxiliary quantity updates in \eqref{label_130} and \eqref{label_121}, respectively, enabling the auxiliary point update in \eqref{label_129} along with the estimate function optima update in \eqref{label_131} we obtain Algorithm~\ref{label_135}. Note that Algorithm~\ref{label_135} leaves the choice of dampening parameters $\alpha_k \geq 0$ to the user. The parameter list also includes the lower bound on the Lipschitz estimate $L_l$, that will be required by certain approaches for updating the dampening parameters described in the sequel.

\begin{algorithm}[h!]
\caption{An Enhanced Accelerated Composite Gradient Method}
\label{label_135}
\begin{algorithmic}[1]
\STATE {\bf Input:} $B \succ 0$, ${x}_0 \in \mathbb{R}^n$, $\mu_f \geq 0$, $\mu_{\Psi} \geq 0 $, $\alpha_0 \in [0, 1]$, \\$L_0 > 0$, $L_l \geq 0$, $r_u > 1, r_d \in (0, 1]$, $T \inn{\infty}$
\STATE ${v}_0 = {x}_0$, $\mu = \mu_f + \mu_{\Psi}$
\STATE $A_0 = 0$, $\gamma_0 = 1$
\FOR{$k = 0,\ldots{},T-1$}
\STATE Select $\alpha_{k + 1}$ to ensure that $\Gamma_{k + 1} \geq 0$
\STATE $\tilde{\gamma}_k = \gamma_k + \mu (1 - \alpha_k) A_k$
\STATE $L_{k + 1} := \max\{L_l, r_d L_k\}$
\LOOP
\STATE $q_{k + 1} := \frac{\mu}{L_{k + 1} + \mu_{\Psi}}$
\STATE $\bar{\beta}_{k + 1} = \frac{\alpha_{k + 1}}{1 + q_{k + 1} \alpha_{k + 1}} - \alpha_k$
\STATE $a_{k + 1} = \frac{1}{2 (\bar{L}_{k + 1} - \mu)} \left( \tilde{\gamma}_k + \sqrt{\tilde{\gamma}_k^2 + 4 (\bar{L}_{k + 1} - \mu) A_k (\gamma_k + \mu \bar{\beta}_{k + 1} A_k) } \right)$ \label{label_136}
\STATE $A_{k + 1} = A_k + a_{k + 1}$
\STATE $\bar{a}_{k + 1} = a_{k + 1} + q_{k + 1} \alpha_{k + 1} A_{k + 1}$
\STATE $\gamma_{k + 1} = \gamma_k + \mu \left(a_{k + 1} + \alpha_{k + 1} A_{k + 1} - \alpha_k A_k\right)$
\STATE $\bar{\gamma}_{k + 1} = \gamma_{k + 1} - \mu \alpha_k \bar{a}_{k + 1}$
\STATE $y_{k + 1} = \left( A_k \bar{\gamma}_{k + 1} + \bar{a}_{k + 1} \gamma_k \right)^{-1} \left( A_k \bar{\gamma}_{k + 1} x_k + \bar{a}_{k + 1} \gamma_k v_k \right)$ \label{label_137}
\STATE ${x}_{k + 1} := T_{L_{k + 1}}({y}_{k + 1})$ \label{label_138}
\IF {$f({x}_{k + 1}) \leq f({y}_{k + 1}) + \langle f'({y}_{k + 1}), {x}_{k + 1} - {y}_{k + 1} \rangle + \frac{L_{k + 1}}{2} \| {x}_{k + 1} - {y}_{k + 1} \|_2^2$} \label{label_139}
\STATE Break from loop
\ELSE
\STATE $L_{k + 1} := r_u L_{k + 1}$
\ENDIF
\ENDLOOP
\STATE $v_{k + 1} = \frac{\gamma_k}{\bar{\gamma}_{k + 1}} v_k + \left( 1 - \frac{\gamma_k}{\bar{\gamma}_{k + 1}} \right) y_{k + 1} - \frac{\bar{a}_{k + 1}}{\gamma_{k + 1}} B^{-1} g_{k + 1}$
\ENDFOR
\end{algorithmic}
\end{algorithm}

\subsection{Choosing the dampening parameters}

The challenge lies in selecting a suitable strategy for computing the dampening parameters $\alpha_k$, $k \geq 0$ to uphold \eqref{label_134}. In the following, we present two very simple but pessimistic approaches.

\begin{proposition} \label{label_140}
Let $L_l$ be the lowest Lipschitz estimate encountered by the algorithm satisfying the descent rule \eqref{label_097} and let $q_l \overset{\operatorname{def}}{=} \mu / (L_l + \mu_{\Psi})$. When $\alpha_k = \alpha$ for all $k \geq 0$, $\gamma_0 \geq \mu (1 + \alpha) A_0$, $q_l \leq 1/3$ and $0 \leq \alpha \leq \alpha_{\operatorname{max}}(q_l)$ then \eqref{label_127} can be satisfied with equality without invalidating \eqref{label_128} for all $k \geq 0$, regardless of the algorithmic state. Here $\alpha_{\operatorname{max}}(q)$ satisfies the following equation:
\begin{equation} \label{label_141}
\alpha_{\operatorname{max}}(q) = \alpha \mbox{ s.t. } \delta(q, \alpha) = 0,
\end{equation}
where we define the function $\delta$ for any $q \in [0, 1]$ and $\alpha \in [0, 1]$ as
\begin{equation}
\delta(q, \alpha) \overset{\operatorname{def}}{=} (1 - \alpha) \sqrt{(1 + \alpha) (1 + q \alpha)} - \sqrt{q} \alpha \left(1 - q \alpha^2 \right).
\end{equation}
\end{proposition}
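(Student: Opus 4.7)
The plan is to specialize the general framework to constant dampening $\alpha_k = \alpha$, maintain a state-space invariant, and reduce the simultaneous satisfiability of \eqref{label_127} (with equality) and \eqref{label_128} to a scale-free inequality in $\alpha$ and $q$ that matches $\delta(q,\alpha) \geq 0$. Substituting $\alpha_k = \alpha_{k+1} = \alpha$ collapses $\beta_{k+1}$ to $-q_{k+1}\alpha^2$ and yields the simplified updates $\gamma_{k+1} = \gamma_k + \mu(1+\alpha)a_{k+1}$, $\bar{a}_{k+1} = (1+q_{k+1}\alpha)a_{k+1} + q_{k+1}\alpha A_k$, and $\bar{\gamma}_{k+1} = \gamma_k + \mu(1-q_{k+1}\alpha^2)a_{k+1} - \mu q_{k+1}\alpha^2 A_k$. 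Since $\gamma_{k+1} - \mu(1+\alpha)A_{k+1} = \gamma_k - \mu(1+\alpha)A_k$, the hypothesis $\gamma_0 \geq \mu(1+\alpha)A_0$ propagates inductively and gives $\gamma_k \geq \mu(1+\alpha)A_k$ for every $k$.

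Enforcing \eqref{label_127} with equality selects $a_{k+1} = a_{k+1}^{(1)}$, the positive root of a quadratic in $a_{k+1}$ whose coefficients depend polynomially on $A_k$, $\gamma_k$, $\mu$, and $q_{k+1}$. The crux is to verify that \eqref{label_128} also holds at this $a_{k+1}$. Expanding $\bar{\gamma}_{k+1}^2 - \gamma_k\gamma_{k+1}$ as a quadratic in $a_{k+1}$, substituting $a_{k+1}^{(1)}$, and using the defining relation to eliminate higher powers reduces everything to a polynomial inequality in $(A_k, \gamma_k)$. By the homogeneity of this inequality under the joint scaling of $\gamma_k$ and $\mu A_k$, it depends only on $\alpha$, $q_{k+1}$, and the single dimensionless ratio $\rho = \mu A_k/\gamma_k$, which the invariant restricts to $[0, 1/(1+\alpha)]$.

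The central calculation is to identify the worst-case $\rho$ on that interval and recognize it as $\delta(q,\alpha) \geq 0$. At the boundary $\rho = 1/(1+\alpha)$, where the invariant is tight, the quadratic for $a_{k+1}^{(1)}$ yields $a_{k+1}^{(1)}/A_k = (q + w)/(1-q)$ with $w = \sqrt{q(1+\alpha)/(1+q\alpha)}$, and after squaring and a careful factoring that exploits the identity $(1+q\alpha)w^2 = q(1+\alpha)$, the reduced inequality collapses to $(1-\alpha)^2(1+\alpha)(1+q\alpha) \geq q\alpha^2(1-q\alpha^2)^2$, which is exactly $\delta(q_{k+1},\alpha) \geq 0$ since both terms defining $\delta$ are non-negative on the range of interest. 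The technical hypothesis $q_l \leq 1/3$ enters through this step: it guarantees that on $[0, \alpha_{\operatorname{max}}(q_l)]$ the reduced polynomial is monotone in $\rho$ (equivalently, that the sign of its $\rho$-derivative coefficient is controlled), so that the minimum over $\rho$ lies on the boundary $1/(1+\alpha)$ and not at an interior critical point.

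To close, I would verify that $\alpha_{\operatorname{max}}(q)$ is well-defined from $\delta(q,0) = 1 > 0$ and $\delta(q,1) = -\sqrt{q}(1-q) < 0$, show monotonicity $\partial_q \alpha_{\operatorname{max}} \leq 0$ via implicit differentiation of $\delta = 0$, and combine with $q_{k+1} \leq q_l$ (which follows from $L_{k+1} \geq L_l$) to conclude that $\alpha \leq \alpha_{\operatorname{max}}(q_l)$ implies $\delta(q_{k+1},\alpha) \geq 0$ at every iteration, preserving \eqref{label_128}. The main obstacle is the central algebraic reduction: tracking the polynomial bookkeeping after substituting $a_{k+1}^{(1)}$, producing the specific factorization that turns the resulting quartic in $\alpha$ into the clean form $\delta$, and simultaneously pinning down $q_l \leq 1/3$ as the precise threshold below which the worst case sits at the invariant boundary.
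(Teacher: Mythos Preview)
Your overall structure is sound and will get you to the result, but there are two points where it diverges from the paper's argument that deserve comment.

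First, the paper avoids your proposed substitution of the explicit root $a_{k+1}^{(1)}$ entirely. Instead it expands $\bar{\gamma}_{k+1}^2 - \gamma_k\gamma_{k+1}$ directly and uses the equality in \eqref{label_127} to replace the single quadratic term $\mu\alpha^2 A_{k+1}\,(\bar{a}_{k+1}/A_{k+1})^2$ by $q_{k+1}\alpha^2(1+q_{k+1}\alpha)\gamma_{k+1}$. After this, the gap factors as
\[
\bar{\gamma}_{k+1}^2 - \gamma_k\gamma_{k+1} \;=\; \mu A_{k+1}\gamma_{k+1}\Bigl((1-\alpha)\tfrac{\bar{a}_{k+1}}{A_{k+1}} - q_{k+1}\alpha(1-q_{k+1}\alpha^2)\Bigr),
\]
which is \emph{linear} in $\bar{a}_{k+1}/A_{k+1}$. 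Since $\alpha\le 1$, the coefficient $(1-\alpha)$ is non-negative, so the worst case sits at the smallest admissible value of $\bar{a}_{k+1}/A_{k+1}$, which by \eqref{label_127} with equality and the invariant $\gamma_{k+1}\ge \mu(1+\alpha)A_{k+1}$ is exactly $\sqrt{q_{k+1}(1+\alpha)(1+q_{k+1}\alpha)}$. Plugging this in gives $\mu A_{k+1}\gamma_{k+1}\sqrt{q_{k+1}}\,\delta(q_{k+1},\alpha)$ at once. No root formula, no homogeneity reduction, no $\rho$-optimization is required.

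Second, and more importantly, you misattribute the role of the hypothesis $q_l\le 1/3$. The monotonicity in your $\rho$ (equivalently, in $\bar{a}_{k+1}/A_{k+1}$) is automatic from $\alpha\le 1$; it does not need any restriction on $q_l$. What $q_l\le 1/3$ actually buys is the monotonicity of $\delta(q,\alpha)$ in $q$ (and in $\alpha$) over $q\in[0,1/3]$, $\alpha\in[0,1]$, which is the step that lets you pass from $\delta(q_{k+1},\alpha)$ to $\delta(q_l,\alpha)$ and then to $\delta(q_l,\alpha_{\operatorname{max}}(q_l))=0$. Your proposed implicit-differentiation argument that $\partial_q\alpha_{\operatorname{max}}\le 0$ is not globally valid: the paper notes that $\alpha_{\operatorname{max}}(q)$ is strictly convex on $[0,1]$ with a minimum near $q\approx 0.4733$, so $\alpha_{\operatorname{max}}$ eventually increases. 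The bound $q_l\le 1/3$ (or something similar) is precisely what confines you to the decreasing branch.
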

\begin{proof}
Within this proof we consider all $k \geq 0$ unless specified otherwise.

The assumption $\alpha_k = \alpha$ gives a growth rate of the estimate function curvature as
\begin{equation}
\gamma_{k + 1} = \gamma_k + \mu (1 + \alpha) a_{k + 1} = \gamma_0 - \mu (1 + \alpha) A_0 + \mu(1 + \alpha) A_{k + 1}.
\end{equation}
Because $\gamma_0 \geq \mu (1 + \alpha) A_0$ it follows that $\frac{\gamma_{k + 1}}{A_{k + 1}} \geq \mu (1 + \alpha)$. Equality in \eqref{label_127} leads to
\begin{equation} \label{label_142}
\left(\frac{\bar{a}_{k + 1}}{A_{k + 1}}\right)^2 = \frac{(1 + q_{k + 1} \alpha) \gamma_{k + 1}}{\bar{L}_{k + 1} A_{k + 1}} \geq q_{k + 1} (1 + \alpha) (1 + q_{k + 1} \alpha).
\end{equation}

From the definition of $\bar{a}_k$ we also have
\begin{equation} \label{label_143}
\frac{\bar{a}_{k + 1}}{A_{k + 1}} = \frac{a_{k + 1}}{A_{k + 1}} + q_{k + 1} \alpha.
\end{equation}

The gap in \eqref{label_128} can be expressed as
\begin{equation} \label{label_144}
\begin{gathered}
\bar{\gamma}_{k + 1}^2 - \gamma_k \gamma_{k + 1} = (\gamma_{k + 1} - \mu \alpha \bar{a}_{k + 1})^2 - \gamma_{k + 1} (\gamma_{k + 1} - \mu (1 + \alpha) a_{k + 1}) \\
= \mu A_{k + 1} \left( \mu \alpha^2 A_{k + 1} \left(\frac{\bar{a}_{k + 1}}{A_{k + 1}}\right)^2 + \gamma_{k + 1} \left( - 2 \alpha \frac{\bar{a}_{k + 1}}{A_{k + 1}} + (1 + \alpha) \frac{a_{k + 1}}{A_{k + 1}} \right) \right) \\
= \mu A_{k + 1} \gamma_{k + 1} \left( q_{k + 1} \alpha^2(1 + q_{k + 1} \alpha) + (1 + \alpha) \left( \frac{\bar{a}_{k + 1}}{A_{k + 1}} - q_{k + 1} \alpha \right) - 2 \alpha \frac{\bar{a}_{k + 1}}{A_{k + 1}} \right) \\
= \mu A_{k + 1} \gamma_{k + 1} \left( (1 - \alpha) \frac{\bar{a}_{k + 1}}{A_{k + 1}} - q_{k + 1} \alpha \left( 1 - q_{k + 1} \alpha^2 \right) \right).
\end{gathered}
\end{equation}
Combining \eqref{label_142} and \eqref{label_144} gives
\begin{equation}
\bar{\gamma}_{k + 1}^2 - \gamma_k \gamma_{k + 1} \geq \mu A_{k + 1} \gamma_{k + 1} \sqrt{q_{k + 1}} \delta(q_{k + 1}, \alpha).
\end{equation}
The function $\delta(q, \alpha)$ is decreasing in both $q$ and $\alpha$ in the range $q \in [0, 1/3]$, $\alpha \in [0, 1]$.
\end{proof}
The sixth degree equation in \eqref{label_141} may not be solvable by radicals. However, the monotonicity of the function $\delta(q, \alpha)$ in $\alpha \in [0, 1]$ for any $q \in [0, 1]$ implies that \eqref{label_141} can be solved up to an arbitrary accuracy, such as machine precision, using bisecting search with negligible computational cost. Note that from our assumptions on the objective, we always have $q_k \leq 1$, $k \geq 1$, regardless of the algorithmic state, therefore the range of $q$ should never exceed $[0,1]$.

The function $\alpha_{\operatorname{max}}(q)$ is strictly convex in $q$ and can be shown numerically to attain its minimum over $q \in [0, 1]$ at $\alpha_{\operatorname{max}}^* \approx \alpha_{\operatorname{max}}(0.4733) \approx 0.7542$. The approximate values have been rounded down. Therefore, when a best case Lipschitz estimate cannot be established beforehand, we can provide the following worst-case choice of $\alpha$ for a value of $L_l = 0$.

\begin{corollary} \label{label_145}
When $\alpha_k = 0.7542$, $k \geq 0$, we have $\delta(q, \alpha) \geq 0$ for all $q \in [0, 1]$. Consequently, equality in \eqref{label_127} always ensures \eqref{label_128} without the need to impose any constraints on the local condition number.
\end{corollary}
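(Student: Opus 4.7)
My plan is to leverage Proposition~\ref{label_140} as directly as possible and isolate the single analytic inequality that needs verification. The proof of that proposition established, under the constant-$\alpha$ regime with $\gamma_0 \geq \mu(1+\alpha)A_0$ (which holds trivially for the algorithm's initialization $\gamma_0=1$, $A_0=0$) and equality in \eqref{label_127}, the chain
\[
\bar{\gamma}_{k+1}^2 - \gamma_k \gamma_{k+1} \;\geq\; \mu A_{k+1}\gamma_{k+1}\sqrt{q_{k+1}}\,\delta(q_{k+1},\alpha).
\]
Nothing in that derivation actually used $q_{k+1}\leq 1/3$; the restriction entered only in the final line to conclude nonnegativity via monotonicity of $\delta$ on a subdomain. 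Consequently the present corollary reduces to showing the sharper global statement $\delta(q,0.7542) \geq 0$ for every $q\in[0,1]$.

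To prove this, first I would observe that for each fixed $q\in[0,1]$ the map $\alpha\mapsto\delta(q,\alpha)$ is strictly decreasing on $[0,1]$: a short derivative check shows that $(1-\alpha)\sqrt{(1+\alpha)(1+q\alpha)}$ is strictly decreasing in $\alpha$ on $[0,1]$, while $\sqrt{q}\,\alpha(1-q\alpha^2)$ is nonnegative and, for $q\in[0,1]$, increasing in $\alpha$ on $[0,1]$. Hence $\delta(q,\alpha)\geq 0 \iff \alpha\leq \alpha_{\operatorname{max}}(q)$, with $\alpha_{\operatorname{max}}(q)$ defined by \eqref{label_141}. Second, I would invoke the convexity/minimization remark made just before the corollary: $\alpha_{\operatorname{max}}$ is strictly convex on $[0,1]$ with an interior minimum at $q^\star\approx 0.4733$ of value $\alpha_{\operatorname{max}}(q^\star)\approx 0.7542$, both rounded down. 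Convexity promotes this interior minimum to a global one, yielding $\alpha_{\operatorname{max}}(q)\geq 0.7542$ for every $q\in[0,1]$, and combining with the monotonicity above gives $\delta(q,0.7542)\geq \delta(q,\alpha_{\operatorname{max}}(q)) = 0$. The concluding sentence of the corollary then follows verbatim: the inequality $\delta(q_{k+1},\alpha)\geq 0$ together with equality in \eqref{label_127} forces $\bar{\gamma}_{k+1}^2\geq \gamma_k\gamma_{k+1}$, which is exactly \eqref{label_128}.

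The main obstacle is making the numerical bound $\min_{q\in[0,1]}\alpha_{\operatorname{max}}(q)\geq 0.7542$ rigorous. The defining relation \eqref{label_141} is sextic in $\alpha$ with $q$-dependent coefficients and is not solvable by radicals, so the global minimization is inherently semi-numerical. A clean rigorous route is to avoid computing $\alpha_{\operatorname{max}}$ at all and instead verify the single univariate polynomial inequality obtained by squaring $\delta(q,0.7542)\geq 0$ and clearing the square root; this produces a polynomial of modest degree in $q$ whose nonnegativity on $[0,1]$ is certified by, for instance, Sturm's theorem, Sturm-like sign tables, or verified interval arithmetic. The rounding-down of both numerical constants $0.4733$ and $0.7542$ provides the slack needed to convert a plotted inequality into a fully certified one.
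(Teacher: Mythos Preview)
Your approach is essentially the paper's own: the corollary has no separate proof in the paper and rests entirely on the preceding paragraph, which asserts (numerically) that $\alpha_{\max}(\cdot)$ is strictly convex on $[0,1]$ with global minimum $\alpha_{\max}^*\approx 0.7542$, rounded down, so that $\alpha=0.7542\le\alpha_{\max}(q)$ for every $q\in[0,1]$ and hence $\delta(q,0.7542)\ge 0$. You reproduce this reasoning and, commendably, go further by sketching how to certify it rigorously via a univariate polynomial inequality.

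There is, however, one concrete slip in your monotonicity sketch. The claim that $\sqrt{q}\,\alpha(1-q\alpha^2)$ is increasing in $\alpha$ on $[0,1]$ for all $q\in[0,1]$ is false: its $\alpha$-derivative is $\sqrt{q}(1-3q\alpha^2)$, which changes sign at $\alpha=1/\sqrt{3q}<1$ whenever $q>1/3$ (e.g.\ at $q=1$ the function peaks at $\alpha=1/\sqrt{3}$ and then decreases to $0$). This is exactly why the paper's proof of Proposition~\ref{label_140} restricts its monotonicity assertion to $q\in[0,1/3]$. The global monotonicity of $\delta(q,\cdot)$ on $[0,1]$ does in fact hold, but it needs a joint estimate on the full derivative rather than your term-by-term argument. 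Since your fallback route---squaring out $\delta(q,0.7542)\ge 0$ and certifying the resulting polynomial in $q$ directly---bypasses the monotonicity claim altogether and is sound, the overall proposal stands; just drop or repair the ``short derivative check'' sentence.
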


\subsection{Convergence analysis}
Having established sufficient conditions for convergence, it remains to determine a bound on the worst-case rate.
\begin{proposition} \label{label_146}
Let $L_u \overset{\operatorname{def}}{=} \max\{r_d L_0, r_u L_f\}$ be the worst-case Lipschitz estimate and let $q_u \overset{\operatorname{def}}{=} \mu / (L_u + \mu_{\Psi})$. When $\alpha_k = \alpha$, $k \geq 0$ and $\gamma_0 \geq \mu (1 + \alpha) A_0$, the convergence guarantee increase is governed by
\begin{equation}
A_{k + 1} \geq \left( 1 - \sqrt{q_{k + 1}} r(q_{k + 1}, \alpha) \right)^{-1} A_k \geq \left( 1 - \sqrt{q_u} r(q_u, \alpha) \right)^{-1} A_k, \quad k \geq 0,
\end{equation}
where the ratio $r(q, \alpha)$ is given for any $q \in [0, 1]$ and $\alpha \in [0, 1]$ by
\begin{equation}
r(q, \alpha) \overset{\operatorname{def}}{=} \sqrt{(1 + \alpha) (1 + q \alpha)} - \sqrt{q} \alpha.
\end{equation}
\end{proposition}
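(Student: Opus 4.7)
The plan is to follow the same bootstrap used in the proof of Proposition~\ref{label_140}, namely exploit equality in \eqref{label_127} as the most aggressive admissible weight update, then convert the resulting lower bound on $\bar{a}_{k+1}/A_{k+1}$ into one on $a_{k+1}/A_{k+1}$, and finally rearrange into a multiplicative recursion on $A_k$. The monotonicity with respect to $q$ is then a separate, purely scalar fact.

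First I would recall from the proof of Proposition~\ref{label_140} that the assumption $\alpha_k=\alpha$ together with $\gamma_0\geq\mu(1+\alpha)A_0$ yields the invariant $\gamma_{k+1}/A_{k+1}\geq\mu(1+\alpha)$. Substituting this into the equality version of \eqref{label_127} (which is the largest $a_{k+1}$ compatible with the ESP under \eqref{label_128}) reproduces \eqref{label_142}, i.e.\
\begin{equation*}
\left(\frac{\bar{a}_{k+1}}{A_{k+1}}\right)^2 \;\geq\; q_{k+1}(1+\alpha)(1+q_{k+1}\alpha).
\end{equation*}
Using \eqref{label_143} to subtract $q_{k+1}\alpha$ from both sides gives
\begin{equation*}
\frac{a_{k+1}}{A_{k+1}} \;\geq\; \sqrt{q_{k+1}}\left(\sqrt{(1+\alpha)(1+q_{k+1}\alpha)}-\sqrt{q_{k+1}}\,\alpha\right) \;=\; \sqrt{q_{k+1}}\,r(q_{k+1},\alpha).
\end{equation*}
Writing $a_{k+1}=A_{k+1}-A_k$ and rearranging produces the first inequality
\[
A_{k+1} \;\geq\; \bigl(1-\sqrt{q_{k+1}}\,r(q_{k+1},\alpha)\bigr)^{-1}A_k.
\]

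For the second inequality I would observe that $L_{k+1}\leq L_u$ implies $q_{k+1}\geq q_u$, so the statement reduces to showing that the scalar map $q\mapsto \sqrt{q}\,r(q,\alpha)=\sqrt{q(1+\alpha)(1+q\alpha)}-q\alpha$ is non-decreasing on $[0,1]$ for every $\alpha\in[0,1]$. Differentiating in $q$, this reduces to the inequality $(1+\alpha)(1+2q\alpha)\geq 2\alpha\sqrt{q(1+\alpha)(1+q\alpha)}$, which after squaring and simplifying becomes $1+\alpha+4q\alpha+4q^2\alpha^2\geq 0$, trivially true.

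The main obstacle is really only the scalar monotonicity step: everything else mirrors the algebra already performed in the proof of Proposition~\ref{label_140}, so the risk is picking the right form of $r(q,\alpha)$ so that the squaring manipulation collapses cleanly. Once the sign of the derivative is established, the chain $q_{k+1}\geq q_u \Rightarrow \sqrt{q_{k+1}}\,r(q_{k+1},\alpha)\geq \sqrt{q_u}\,r(q_u,\alpha)$ immediately yields the uniform geometric ratio claimed in the proposition.
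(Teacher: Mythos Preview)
Your proposal is correct and follows essentially the same approach as the paper: combine \eqref{label_142} and \eqref{label_143} to bound $a_{k+1}/A_{k+1}$ from below, rearrange into a multiplicative recursion, and then invoke the monotonicity of $q\mapsto\sqrt{q}\,r(q,\alpha)$ to pass to the uniform rate. You go further than the paper by actually verifying the monotonicity via differentiation (the paper merely asserts it), and you correctly state $q_{k+1}\geq q_u$ from $L_{k+1}\leq L_u$, which is the direction needed for the second inequality (the paper's proof contains a sign typo here, writing $q_{k+1}\leq q_u$).
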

\begin{proof}
Combining \eqref{label_142} and \eqref{label_143} we obtain for all $k \geq 0$ that
\begin{equation} \label{label_147}
\left(\left(1 - \frac{A_k}{A_{k + 1}}\right) - q_{k + 1} \alpha \right)^2 \geq q_{k + 1} (1 + \alpha) (1 + q_{k + 1} \alpha).
\end{equation}
Taking the square root and rearranging terms in \eqref{label_147} gives the first inequality. Noting that $\sqrt{q} r(q, \alpha)$ is an increasing function in $q$ over the entire range $q \in [0, 1]$ for any $\alpha \in [0, 1]$ and that $q_{k + 1} \leq q_u$ for all $k \geq 0$, we obtain the second inequality.
\end{proof}
When $A_0 = 0$, we have $\bar{\gamma}_0 = \gamma_0 = 1$ and consequently $A_1 = a_1 = \frac{1}{L_1 - \mu_f} \geq \frac{1}{L_u - \mu_f}$. If we also have $\alpha_k = \alpha$, $k \geq 0$, the conditions in Proposition~\ref{label_146} are met and hence we have
\begin{equation} \label{label_148}
A_k \geq (1 - r(q_u, \alpha) \sqrt{q_u})^{1 - k} \frac{1}{L_u - \mu_f}, \quad k \geq 1.
\end{equation}
Provided that the estimate sequence property is maintained at runtime by Algorithm~\ref{label_135}, such as when $\alpha$ is selected according to Proposition~\ref{label_140} or Corollary~\ref{label_145}, we obtain the following worst-case rate for all $k \geq 1$ on the iterates
\begin{equation} \label{label_149}
\| v_k - x^* \|^2 \overset{\eqref{label_115}}{\leq} \frac{2 \mathcal{D}_0}{\gamma_k} \leq \frac{2 \mathcal{D}_0}{\mu (1 + \alpha) A_k} \overset{\eqref{label_148}}{\leq} \frac{L_u - \mu_f}{\mu (1 + \alpha)} \left( 1 - r(q_u, \alpha) \sqrt{q_u} \right)^{k - 1} \| x_0 - x^* \|^2 .
\end{equation}

When we compare \eqref{label_149} with the convergence rate for estimate function optima in ACGM~\cite{ref_010,ref_003}, given by
\begin{equation} \label{label_150}
\| v_k - x^* \|^2 \leq \frac{L_u - \mu_f}{\mu} \left( 1 - \sqrt{q_u} \right)^{k - 1} \| x_0 - x^* \|^2,
\end{equation}
we see that the difference in the asymptotic rate is given by the term $r(q_u, \alpha)$. We list in Table~\ref{label_151} the values of this term, computed using either the value $\alpha_{\operatorname{max}}(q_l)$ recommended by Proposition~\ref{label_140} or using the ideal case of $\alpha = 1$. We see clearly that the worst-case rates given by Proposition~\ref{label_140} are almost indistinguishable from the ideal case. Table~\ref{label_151} also confirms that the values of $r(q_u, \alpha)$ lie within the range $[1, \sqrt{2}]$, approaching $\sqrt{2}$ for very small values of $q_u$, the situation most commonly encountered in practice.

\begin{table}[h]
\caption{Values of $\alpha_{\operatorname{max}}$ and the ratio $r$ for various combinations of $q_l$ and $q_u$ as well as in the ideal case}
\label{label_151}
\setlength{\tabcolsep}{4.5pt}
\centering \footnotesize
\begin{tabular}{lcccccccccc} \toprule
$q_l$ & $10^{-7}$ & $10^{-6}$ & $10^{-5}$ & $10^{-4}$ & $10^{-3}$ & $10^{-2}$ & $10^{-1}$ & $1/3$ & $0.4733$ & 1 \\
$\alpha_{\operatorname{max}}(q_l)$ & 0.9998 & 0.9993 & 0.9978 & 0.9930 & 0.9780 & 0.9337 & 0.8268 & 0.7614 & 0.7542 & 1.0000 \\
$r(q_l, 1)$ & 1.4139 & 1.4132 & 1.4111 & 1.4043 & 1.3833 & 1.3213 & 1.1670 & 1.0556 & 1.0286 & 1.0000 \\ \midrule
$q_u / q_l$ & \multicolumn{10}{c}{$r(q_u, \alpha_{\operatorname{max}}(q_l))$} \\ \midrule
$1$ & 1.4138 & 1.4130 & 1.4103 & 1.4019 & 1.3762 & 1.3037 & 1.1449 & 1.0465 & 1.0240 & 1.0000 \\
$10^{- 1}$ & 1.4140 & 1.4136 & 1.4124 & 1.4086 & 1.3967 & 1.3617 & 1.2745 & 1.2049 & 1.1838 & 1.1670 \\
$10^{- 2}$ & 1.4141 & 1.4139 & 1.4131 & 1.4107 & 1.4033 & 1.3813 & 1.3260 & 1.2849 & 1.2749 & 1.3213 \\
$10^{- 3}$ & 1.4141 & 1.4139 & 1.4133 & 1.4114 & 1.4055 & 1.3876 & 1.3434 & 1.3134 & 1.3083 & 1.3833 \\
$10^{- 4}$ & 1.4141 & 1.4140 & 1.4134 & 1.4116 & 1.4061 & 1.3897 & 1.3490 & 1.3228 & 1.3193 & 1.4043 \\
$10^{- 5}$ & 1.4141 & 1.4140 & 1.4134 & 1.4117 & 1.4063 & 1.3903 & 1.3508 & 1.3258 & 1.3228 & 1.4111 \\ \bottomrule
\end{tabular}
\end{table}

\clearpage
\section{Simulations}

Our two methods operate within two progressively larger problem classes: smooth unconstrained and composite. We select two synthetic problem instances representative of each class to test the effectiveness of each method.

\subsection{Smooth unconstrained problems}

The benchmark for this class consists of a Smoothed Piece-wise Linear objective problem (SPL) and an ill-conditioned quadratic problem (QUAD), both regularized with a quadratic term in order to add known strong convexity to the objective. The oracle functions and their structure are listed in Table~\ref{label_152}.

\begin{table}[h]
\centering
\small
\caption{Smooth unconstrained test problem oracle functions and their structure}
\label{label_152}
\begin{tabular}{ccc} \toprule
& SPL & QUAD~1 \\ \midrule
$ f(x)$ & $s \mathcal{E}\left(\frac{1}{s}(A x - b) \right) + \frac{\mu}{2} \|x \|^2$ & $\frac{1}{2} \langle x, A x \rangle + \frac{\mu}{2} \|x \|^2$ \\[2 mm]
$ f'(x)$ & $A^T \mathcal{S} \left(\frac{1}{s}(A x - b) \right) + \mu x $ & $A x + \mu x$ \\[2 mm]
$A$ & $\hat{A} - 1_M \left( \mathcal{S}\left( -\frac{1}{s} b \right)^T \hat{A} \right)$ & $\operatorname{diag}(\sigma)$, $\sigma_i = i/n$ \\[2 mm]
$x_0$ & uniformly random on $[-1,1]$, normalized to have $\|x_0\|=1$ & $x_0^{(i)} = 1/ \sigma_i$ \\ \bottomrule
\end{tabular}
\end{table}

In SPL, $\hat{A}$ is an $M = 2400 \times n = 400$ matrix and $b$ is a vector of $M$ variables. All entries of $\hat{A}$ and $b$, as well as of the starting point $x_0$, were sampled independently and uniformly from the interval $[-1, 1]$. The point $x_0$ was later normalized to lie on the unit sphere. For normalization, and throughout all our simulations, we have used the standard Euclidean norm. We chose the smoothing parameter to be $s = 0.05$. The logsumexp $\mathcal{E}$ and softmax $\mathcal{S}$ are, respectively, defined as
\begin{equation}
\mathcal{E}(z) \overset{\operatorname{def}}{=} \log \left( \displaystyle \sum_{i = 1}^{M} e^{z_i} \right) , \quad
\mathcal{S}(z)_i \overset{\operatorname{def}}{=} \left( \displaystyle \sum_{j = 1}^{M} e^{z_j} \right)^{-1} e^{z_i}, \quad z \in \mathbb{R}^M, \ i \inn{M} .
\end{equation}
Without regularization, the gradient of the objective would have a global Lipschitz constant of $L_{f_{(\mu)}} = \frac{1}{s} \displaystyle \max_{i \inn{n}}{ \|a_i\|}$, where $a_i$, $i \inn{n}$, are the columns of $A$.

In QUAD, $A$ is a diagonal matrix of size $n = 1000$ with the diagonal entries, corresponding to the eigenvalues, given by $\sigma_i = i/n$ for $i \inn{n}$. Note that this objective exhibits the strong convexity property with parameter $\bar{\mu} = 1/n = 10^{-3}$. However, to study the behavior of our method in the presence of \emph{unknown} strong convexity, we do not supply this information to any of the methods and instead we separately employ regularization.

For both problems, we choose $\mu = 10^{-4} L_{f_{(\mu)}}$, increasing the smoothness constant of the objective to $L_f = L_{f_{(\mu)}} + \mu = 1.0001 L_{f_{(\mu)}}$. The optimal point for both problems is thus unique and given by $0_n$, the vector of all zeros.

Our method encompasses both ITEM and TMM. To study the effect of adding memory to each of these particular cases, we consider OGMM with two distinct parameter choices. For $A_1 = 0$ and $\gamma_1 = 1$, OGMM constitutes ITEM with Memory (ITEM-M) while for $A_1 = 1$ and $\gamma_1 = 2 \mu r$, OGMM becomes TMM with Memory (TMM-M). To showcase the effectiveness of our adaptive mechanism, we compare our two instances of OGMM with the original ITEM and TMM as well as with the ACGM equipped with a fully-adaptive line-search procedure whereby the local Lipschitz estimate is allowed to decrease.

All methods are supplied with the correct values of $\mu$ and $L_f$. However, the value of $\bar{\mu}$ in QUAD is kept hidden from all algorithms. The bundle size in OGMM is set to $8$. This value is recommended by the good performance on smooth problems displayed by the previously introduced gradient methods with memory (see \cite{ref_016,ref_005,ref_007}). The bundle replacement strategy for OGMM is cyclic, wherein the newest entries displace the oldest. The middle method in OGMM is limited to $N = 2$ Newton iterations per outer iteration. The inner scheme of GMM is a version of the Fast Gradient Method optimized to operate on the simplex, previously employed by other types of gradient methods with memory (e.g., in \cite{ref_005,ref_006,ref_007}). The inner scheme terminates when either $T = 100$ iterations are expended or an absolute inner objective accuracy of $\delta = 10^{-12}$ is reached, whichever occurs first.
ACGM uses $A_0 = 0$ and $\gamma_0 = 1$ along with line-search parameters $r_u = 2$ and $r_d = 0.9$, according to the original algorithm specification in \cite{ref_009,ref_010}.

As mentioned previously, OGMM is able to efficiently and adaptively converge in iterate space, and throughout our simulations, we focus on this type of convergence behavior. We stop all methods at iteration $k$ when the relative error in iterate space $\epsilon_{rel} \overset{\operatorname{def}}{=} \|v_k - x^*\| / \|x_0 - x^*\|$ drops below $10^{-5}$. To highlight the convergence guarantee increase, as well as to display decay rates that resemble those of the function residual, we actually plot $\|v_k - x^*\|^2$ versus iterations. The convergence rates of the tested methods on both problems are shown in Figure~\ref{label_153}

\begin{figure}[h] \centering \footnotesize
\begin{minipage}[t]{0.48\linewidth} \centering
\includegraphics[width=\textwidth]{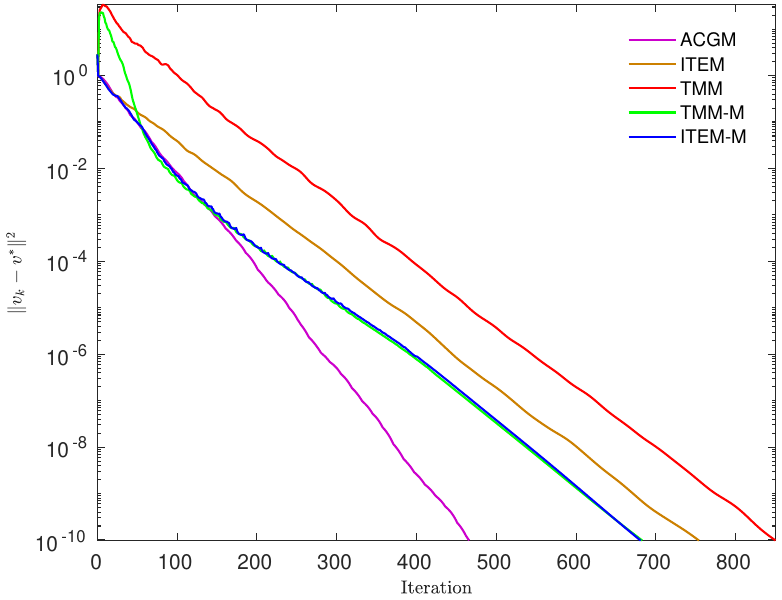}
(a) Convergence rates on SPL
\end{minipage}
\hspace{3mm}
\begin{minipage}[t]{0.48\linewidth} \centering
\includegraphics[width=\textwidth]{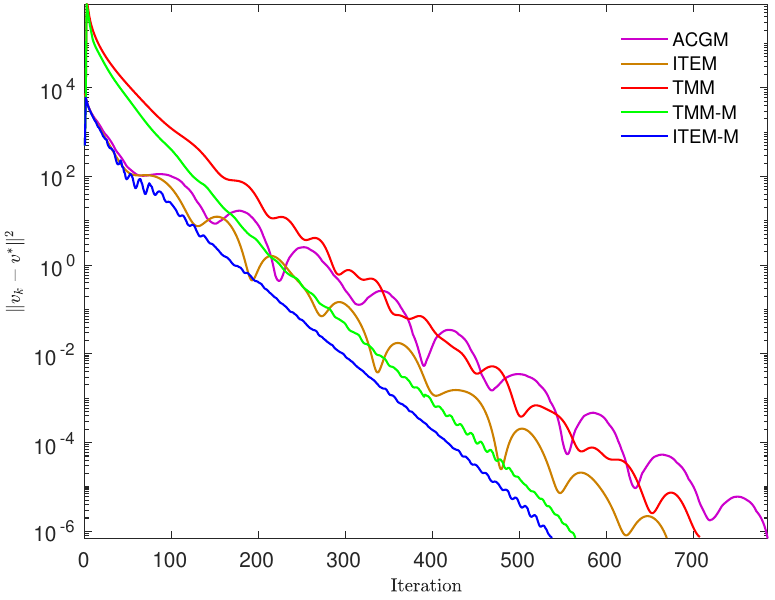}
(b) Convergence rates on QUAD
\end{minipage}
\caption{Convergence behavior of the OGMM instances ITEM-M and TMM-M with bundle size $m = 8$ as well as the original ITEM, TMM and ACGM, all measured in iterate space as $\|v_k - x^*\|^2$ vs. iterations on the smooth unconstrained problem benchmark}
\label{label_153}
\end{figure}

On both problems, ITEM converges significantly faster than TMM. The main reason appears to be an initial divergence allowed by the convergence analysis when $A_1 > 0$. This initial discrepancy is preserved as the algorithms progress, explaining the performance gap. Interestingly, TMM-M also exhibits the same divergence initially, but the adaptive mechanism manages to gradually correct the initial discrepancy. However, the performance of TMM-M still does not exceed that of ITEM-M in iterations on either problem. While the two instances OGMM show similar performance, they both consistently surpass their memory-less counterparts. ACGM is able to take advantage of SPL structure, wherein the local values of the Lipschitz constant are much lower than the global one. However, on the ill-conditioned QUAD, ACGM lags behind all other methods, even the ones without an adaptive mechanism.

On QUAD, none of the methods seem to take full advantage of the hidden parameter $\bar{\mu}$. All methods without memory display the oscillating behavior that is typical in this situation whereas both OGMM instances do not. This may be one means in which the convergence guarantee adjustment procedure can deal with hidden strong convexity. Regardless of the actual mechanics, OGMM displays good and surprisingly stable performance over the entire benchmark.

\subsection{Composite problems} We test the capabilities of the Enhanced ACGM on two simple synthetic composite problems: a dense Elastic Net~\cite{ref_020} problem and an Elastic Net regularized sparse Logistic Regression problem. The oracle functions pertaining to the objective in both problems are described in Table~\ref{label_154} using the sum softplus function $\mathcal{I}(z)$, the element-wise logistic function $\mathcal{L}(z)$ and the shrinkage operator $\mathcal{T}_{\tau}(x)$, respectively given by
\begin{gather}
\mathcal{I}(z) \overset{\operatorname{def}}{=} \displaystyle \sum_{i = 1}^{M} \log(1 + e^{z_i}), \quad \mathcal{L}(z)_i \overset{\operatorname{def}}{=} \frac{1}{1 + e^{-z_i}}, \quad z \in \mathbb{R}^M, \quad i \inn{M} \\
\mathcal{T}_{\tau}(x)_j \overset{\operatorname{def}}{=} \left\{
\begin{array}{ll}
x_j - \tau, & x_j > \tau \\
0, & |x_j| \leq \tau \\
x_j + \tau, & x_j < -\tau
\end{array} \right., \quad x \in \mathbb{E}, \quad \tau > 0, \quad j \inn{n}.
\end{gather}

\begin{table}[h]
\centering
\small
\caption{Oracle functions of the composite benchmark}
\label{label_154}
\begin{tabular}{lllll} \toprule
& $f(x)$ & $\Psi(x)$ & $f'(x)$ & $\operatorname{prox}_{\tau \Psi}(x)$ \\ \midrule
EN & $\frac{1}{2}\|A x - b \|_2^2$ & \multirow{2}{*}{$\lambda \|x\|_1 + \frac{\mu}{2} \|x\|_2^2$} & $A^T(Ax - b)$ & \multirow{2}{*}{$\frac{1}{1 + \tau \mu} \mathcal{T}_{\tau \lambda} (x)$} \\
ENLR & $\mathcal{I}(Ax) - \langle b, A x\rangle$ & & $A^T(\mathcal{L}(Ax) - b)$ & \\\bottomrule
\end{tabular}
\end{table}

In EN, $A$ is a square matrix ($M = n$) of size $n = 2500$ and $b$ a vector of size $n$. The entries of $A$, $b$ and the starting point $x_0$ are drawn independently from the standard normal distribution with scale $1$ for $A$ and $x_0$ and scale $5$ for $b$. The smoothness parameter of $f$ is given by $L_f = \sigma_{\max}^2(A)$, where $\sigma_{\max}(A)$ is the largest singular value of $A$. The problem parameters are the sparsifying coefficient $\lambda = 4$ and the strong convexity parameter $\mu = 10^{-4} L_f$. For both problems, all known strong convexity is contained in the regularizer $\Psi$ and we thus have $\mu_f = 0$ and $\mu_{\Psi} = \mu$.

In ENLR, the matrix $A$ is sparse and very large, of size $M = 50000$ by $n = 10000$, whereas the vector $b$ has $M$ values. All entries of $A$ are zero except $0.1\%$, at random locations, which are instead drawn from the standard normal distribution. The starting point $x_0$ has entries drawn from the standard normal distribution with scale $0.5$. The entries of $b$ are the random labels $b_i \in \{0, 1\}$, whose corresponding independent variables $B_i$ satisfy $\mathbb{P}(B_i = 1) = e^{(-A x_0)_i}$ for all $i \inn{M}$. The other problem parameters are $\lambda = 10^{-3}$, $L_f = \frac{1}{4} \sigma_{\max}^2(A)$, $\mu_f = 0$ and $\mu_{\Psi} = \mu = 10^{-4} L_f$.

We have tested 4 instances of EACGM, all with $\alpha_k = \alpha$, $k \geq 0$. In all instances, we have set $A_0 = 0$, $\gamma_0 = 1$ and $L_0 = L_f$ whereas the line-search parameters are $r_u = 2$ and $r_d = 0.9$ according to the original specification of ACGM. The termination thresholds for all methods are $\epsilon_{rel} = 10^{-5}$ for EN and $\epsilon_{rel} = 10^{-4}$ for ENLR. The accuracy threshold in ENLR is limited by the double precision floating point implementation of the objective smooth part.

The first instance of EACGM uses $\alpha = 0$, rendering it identical with original ACGM~\cite{ref_009}. The second employs the worst-case value of $\alpha = 0.7542$. The convergence analysis allows us to set in both situations $L_l = 0$. In the third instance of EACGM we consider $L_l = 0.1 L_f$. We have $q_l = 1/1001$ and set $\alpha = \alpha_l \overset{\operatorname{def}}{=} \alpha_{\operatorname{max}}(q_l) \approx 0.9780$. Lastly, we consider EACGM with $\alpha = 1$, a situation that violates our sufficient condition on convergence in \eqref{label_128}. The goal is to investigate whether the estimate sequence property in \eqref{label_018_comp} is still maintained by the algorithm. For simplicity, we set $L_l = 0$ in this case.

The convergence in iterate space of the four instances is shown for EN and ENLR in Figures \ref{label_155}(a) and \ref{label_155}(b), whereas the increase in the estimate sequence gaps, as given by \eqref{label_119} are shown for EN and ENLR in Figures \ref{label_155}(c) and \ref{label_155}(d), respectively.

\begin{figure}[h] \centering \footnotesize
\begin{minipage}[t]{0.48\linewidth} \centering
\includegraphics[width=\textwidth]{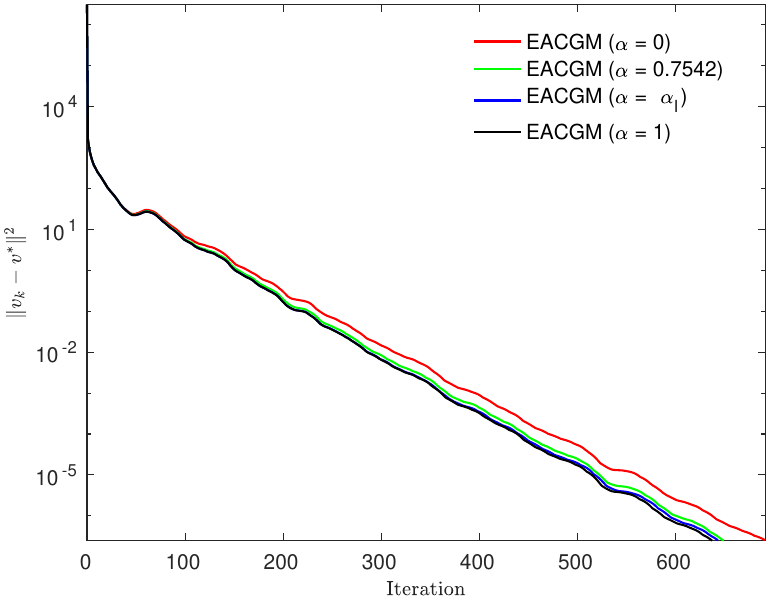}
(a) Convergence rates in iterate space on EN
\end{minipage}
\hspace{3mm}
\begin{minipage}[t]{0.48\linewidth} \centering
\includegraphics[width=\textwidth]{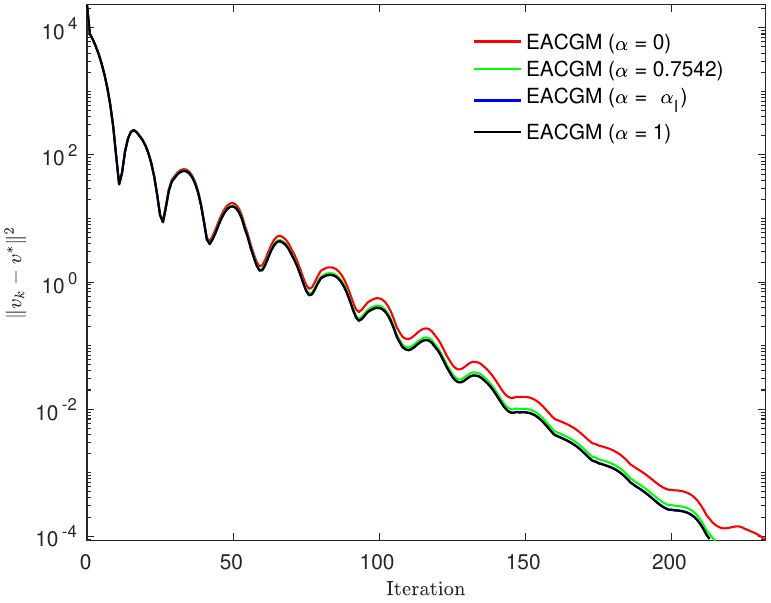}
(b) Convergence rates in iterate space on ENLR
\end{minipage}

\begin{minipage}[t]{0.48\linewidth} \centering
\includegraphics[width=\textwidth]{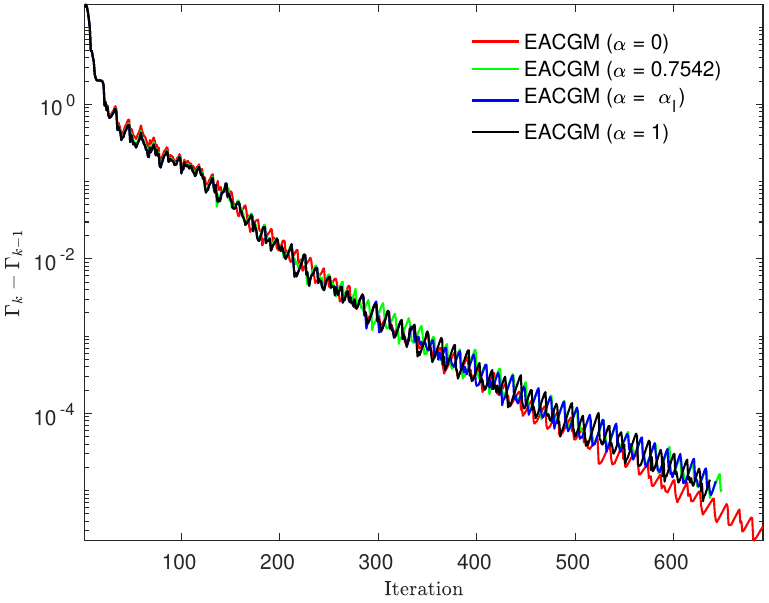}
(c) Estimate sequence gap increase on EN
\end{minipage}
\hspace{3mm}
\begin{minipage}[t]{0.48\linewidth} \centering
\includegraphics[width=\textwidth]{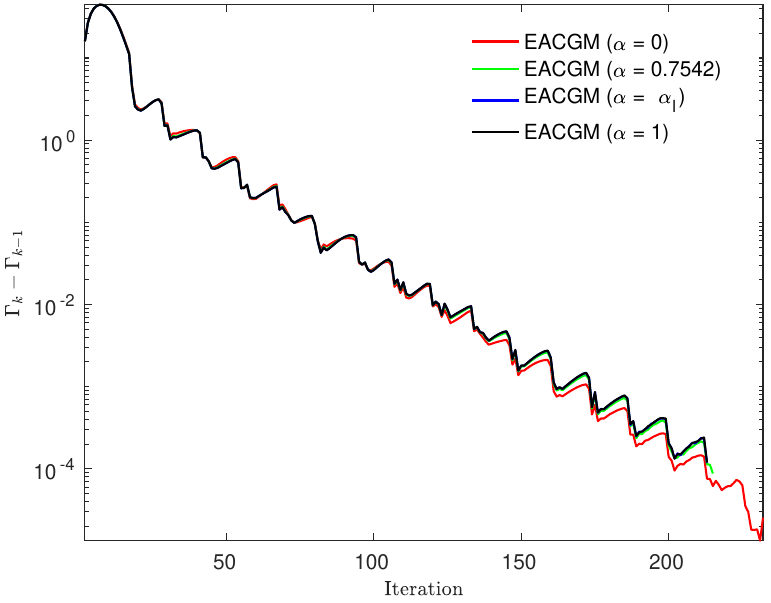}
(d) Estimate sequence gap increase on ENLR
\end{minipage}
\caption{Convergence behavior, measured both in $\|v_k - x^*\|^2$ vs. iterations as well as in the estimate sequence gap increase in \eqref{label_119}, of four instances of EACGM with $\alpha_k = \alpha$, $k \geq 0$, on the composite problem benchmark}
\label{label_155}
\end{figure}

The benefits of enhancement are less pronounced than in the case of memory, although there is a clear and consistent correlation between the value of $\alpha$ and performance: the higher the values of $\alpha$, the faster the convergence in iterate space. Nevertheless, the difference between worst-case $\alpha$ and $\alpha = 1$ is almost indistinguishable, with $\alpha_l$ lying in between. The discrepancy between the actual iterate convergence and the worst-case rate is however very large and suggests that real gains in performance are only to be seen under a very high accuracy regime that is beyond the capabilities of our implementation involving double precision floating point.

Surprisingly, the relationship between the estimate sequence gap increase and the value of $\alpha$ lacks the consistency previously seen in convergence behavior, with the increase being actually larger for $\alpha = 1$. In all instances, the increase is always positive implying that the estimate sequence property in \eqref{label_018_comp} is maintained by all parameter choices, even more so by $\alpha = 1$. This suggests that our analysis may be overly conservative with respect to the value of $\alpha$.

\section{Discussion}

The key theoretical development in this work consists in the introduction of a new mathematical object: the primal-dual estimate function described in \eqref{label_014}. Despite its simplicity, this object has remarkably powerful properties.

First of all, our estimate function is highly generative. Through the successive differentiation of the update in \eqref{label_024} followed by a worst-case analysis in Theorem~\ref{label_033} of a sufficient condition on maintenance of the estimate sequence property in \eqref{label_018}, we were able to derive a generalized Optimized Gradient Method, listed in Algorithm~\ref{label_047}, that encompasses as particular cases the original Optimized Gradient Method (OGM) in \cite{ref_002,ref_011}, the Information Theoretical Exact Method (ITEM) in \cite{ref_017,ref_001}, a strongly-convex generalization of OGM, and the related Triple Momentum Method (TMM) in \cite{ref_019}, as summarized in Table~\ref{label_055}.

By augmenting the estimate function we were able to recover the potential function used in \cite{ref_001} to analyze OGM, ITEM and TMM, only this time collectively while being able to assign a clear role to all constituent quantities. Specifically, in Lemma~\ref{label_057} we have that $\hat{w}_k$ is the known part of the lower bound at the optimum that we use to regularize our estimate function whereas $\gamma_k$ and $v_k$ are the estimate function curvature and primal optimum, respectively. The augmentation procedure is no longer primal as in \cite{ref_007}, but precisely follows the pattern introduced for analyzing the Accelerated Composite Gradient Method (ACGM) in \cite{ref_010,ref_003}. Thus the augmented estimate functions constitute a true relaxation of the estimate functions although this does not affect the overall guarantees. Unlike in \cite{ref_007}, where the potential function analysis yielded better guarantees than the estimate functions, here we see that identical results are obtained using both approaches.

By contrast with the uncomputable potential functions, the estimate functions can be maintained numerically in canonical form while the algorithm is running. This enables us to close the estimate sequence gap and thus dynamically increase the convergence guarantees at runtime. We have introduced the Optimized Gradient Method with Memory (OGMM) that enhances the benefits of this dynamic adjustment procedure by maintaining an aggregated subset of the oracle history. As an extension of our generalized OGM, OGMM also encompasses OGM, ITEM and TMM, having thus the highest known worst-case rate in the non-strongly convex case rate while having optimal and unimprovable guarantees in the strongly convex case, corresponding to a per iteration decrease of $(1 - \sqrt{q})^2$, where $q$ is the inverse condition number, in square distance to optimum or function residual. OGMM distinguishes itself from the previously introduced optimal methods through its adaptive mechanism. Preliminary simulations suggest that the convergence guarantee adjustment procedure can exploit both the local values of the Lipschitz constant as well as unknown strong convexity to a certain extent to produce a superior and remarkably stable convergence behavior.

At first glance, it appears that the mechanics our estimate function closely resemble those of the original estimate function used derive the Fast Gradient Method in \cite{ref_014}. This is indeed the case in the non-strongly convex case, when the additional term $\hat{w}_k$ disappears. The design pattern in \cite{ref_009,ref_010,ref_003} remains valid and we can confirm that the high worst-case rate of OGM is due to the increased tightness of the bounds, as extensively argued in \cite{ref_007}. However, the appearance of the $\hat{w}_k$ term in the strongly convex case underlines a key difference: whereas the role of the estimate function in FGM is to ensure a rapid decrease of the objective function value, the object introduced in this work instead focuses on decreasing the distance to the optimum in iterate space. The optimality of ITEM and TMM in function residual is merely a consequence of this aspect and does not describe the underlying mechanics.

The benefits of this observation are not confined to smooth unconstrained problems and can be readily applied to the much broader class of composite problems. We simply add the $\hat{w}_k$ term to the estimate functions used in deriving ACGM and directly obtain an Enhanced ACGM (EACGM) with improved guarantees in iterate space. Nevertheless, to guarantee convergence, we had to dampen the strongly convex terms in $\hat{w}_k$ with the parameters $\alpha_k$. We have provided two pessimistic strategies for updating the dampening parameters, one where the best case Lipschitz estimate of the smooth part is known and another that does not impose such constraints. We have shown that the worst-case guarantees in the pessimistic strategies are almost indistinguishable from the ideal $\alpha_k = 1$ case, as shown in \eqref{label_149} and Table~\ref{label_151}, an observation supported by preliminary computational results. Whether the ideal case is realizable over the entire composite problem class remains an open question.

As opposed to OGMM, which uses bounds that completely describe the smooth unconstrained problem class~\cite{ref_018}, the bounds employed in EACGM are not optimal, but are instead the tightest known that allow for the implementation of fully-adaptive line-search whereby the Lipschitz estimate can decrease at every iteration. The complete absence of slack terms in the analysis of OGMM argues that the corresponding bounds are used optimally. Under the ideal case of $\alpha_k = 1$ in EACGM, the analysis is also free of slack terms and suggests that the worst-case rate given by \eqref{label_149} and Table~\ref{label_151}, which approaches $1 - \sqrt{2 q}$ per iteration decrease in square distance to optimum for very small values of $q$, the case most commonly encountered in practice, may actually be the highest attainable under the fully-adaptive line-search requirement.

\section*{Acknowledgements}

We thank Yurii Nesterov for providing valuable comments and suggestions.


\begin{thebibliography}{10}

\bibitem{ref_001}
{\sc A.~d'Aspremont, D.~Scieur, and A.~Taylor}, {\em Acceleration methods},
arXiv preprint arXiv:2101.09545 [math.OC], (2021).

\bibitem{ref_002}
{\sc Y.~Drori and M.~Teboulle}, {\em Performance of first-order methods for
smooth convex minimization: a novel approach}, Math. Program., 145 (2014),
pp.~451--482.

\bibitem{ref_003}
{\sc M.~I. Florea}, {\em Constructing Accelerated Algorithms for Large-scale
Optimization}, PhD thesis, Aalto University, School of Electrical
Engineering, Helsinki, Finland, Oct. 2018.

\bibitem{ref_004}
{\sc M.~I. Florea}, {\em An efficient accelerated gradient method with memory
applicable to composite problems}, in 2021 International Aegean Conference on
Electrical Machines and Power Electronics (ACEMP) 2021 International
Conference on Optimization of Electrical and Electronic Equipment (OPTIM),
2021, pp.~473--480,
\href{http://dx.doi.org/10.1109/OPTIM-ACEMP50812.2021.9590072}
{doi:10.1109/OPTIM-ACEMP50812.2021.9590072}.

\bibitem{ref_005}
{\sc M.~I. Florea}, {\em Exact gradient methods with memory}, Optim. Methods
Software, (2022), pp.~1--28,
\href{http://dx.doi.org/10.1080/10556788.2022.2091559}
{doi:10.1080/10556788.2022.2091559}.

\bibitem{ref_006}
{\sc M.~I. Florea}, {\em Gradient methods with memory for minimizing composite
functions}, arXiv preprint arXiv:2203.07318 [math.OC], (2022).

\bibitem{ref_007}
{\sc M.~I. Florea and Y.~Nesterov}, {\em An optimal lower bound for smooth
convex functions}, arXiv preprint arXiv:2404.18889 [math.OC], (2024).

\bibitem{ref_008}
{\sc M.~I. Florea and S.~A. Vorobyov}, {\em A robust {FISTA}-like algorithm},
in Proc. of IEEE Intern. Conf. on Acoustics, Speech and Signal Processing
(ICASSP), March 2017, New Orleans, USA, pp.~4521--4525.

\bibitem{ref_009}
{\sc M.~I. Florea and S.~A. Vorobyov}, {\em An accelerated composite gradient
method for large-scale composite objective problems}, {IEEE} Trans. Signal
Process., 67 (2019), pp.~444--459.

\bibitem{ref_010}
{\sc M.~I. Florea and S.~A. Vorobyov}, {\em A generalized accelerated composite
gradient method: Uniting {N}esterov's fast gradient method and {FISTA}},
{IEEE} Trans. Signal Process., 68 (2020), pp.~3033--3048,
\href{http://dx.doi.org/10.1109/TSP.2020.2988614}
{doi:10.1109/TSP.2020.2988614}.

\bibitem{ref_011}
{\sc D.~Kim and J.~A. Fessler}, {\em Optimized first-order methods for smooth
convex minimization}, Math. Program., 159 (2016), pp.~81--107.

\bibitem{ref_012}
{\sc L.~Lessard, B.~Recht, and A.~Packard}, {\em Analysis and design of
optimization algorithms via integral quadratic constraints}, {SIAM} J.
Optim., 26 (2016), pp.~57--95.

\bibitem{ref_013}
{\sc I.~Necoara, Y.~Nesterov, and F.~Glineur}, {\em Linear convergence of first
order methods for non-strongly convex optimization}, Math. Program., Ser. A,
175 (2019), pp.~69--107, \href{http://dx.doi.org/10.1007/s10107-018-1232-1}
{doi:10.1007/s10107-018-1232-1}.

\bibitem{ref_014}
{\sc Y.~Nesterov}, {\em Introductory Lectures on Convex Optimization. Applied
Optimization, vol. 87}, Kluwer Academic Publishers, Boston, MA, 2004.

\bibitem{ref_015}
{\sc Y.~Nesterov}, {\em Gradient methods for minimizing composite functions},
Universit\'e catholique de Louvain, Center for Operations Research and
Econometrics (CORE), CORE Discussion Papers, 140 (2007),
\href{http://dx.doi.org/10.1007/s10107-012-0629-5}
{doi:10.1007/s10107-012-0629-5}.

\bibitem{ref_016}
{\sc Y.~Nesterov and M.~I. Florea}, {\em Gradient methods with memory}, Optim.
Methods Software, 37 (2022), pp.~936--953,
\href{http://dx.doi.org/10.1080/10556788.2020.1858831}
{doi:10.1080/10556788.2020.1858831}.

\bibitem{ref_017}
{\sc A.~Taylor and Y.~Drori}, {\em An optimal gradient method for smooth
strongly convex minimization}, arXiv preprint arXiv:2101.09741 [math.OC],
(2021).

\bibitem{ref_018}
{\sc A.~B. Taylor, J.~M. Hendrickx, and F.~Glineur}, {\em Smooth strongly
convex interpolation and exact worst-case performance of first-order
methods}, Math. Program., 161 (2017b), pp.~307--345.

\bibitem{ref_019}
{\sc B.~Van~Scoy, R.~A. Freeman, and K.~M. Lynch}, {\em The fastest known
globally convergent first-order method for minimizing strongly convex
functions}, IEEE Control Syst. Lett., 2 (2018), pp.~49--54.

\bibitem{ref_020}
{\sc H.~Zou and T.~Hastie}, {\em Regularization and variable selection via the
elastic net}, J. R. Stat. Soc. Ser. B. Methodol., 67 (2005), pp.~301--320.

\end{thebibliography}
\end{document}